
\documentclass[preprint, 12pt]{elsarticle}
\usepackage{amsthm}




\usepackage{amssymb}

\newtheorem{thm}{Theorem}
\newtheorem{theorem}{Theorem}[section]
\newtheorem{corollary}[theorem]{Corollary}

\newtheorem{lemma}[theorem]{Lemma}

\newcommand \bpig {{\textup B}_{\pi}(G)}

\newcommand \ipi {{\textup I}_{\pi}}
\newcommand \ipig {{\textup I}_{\pi}(G)}

\newcommand \irr {\textup{Irr}}
\newcommand \irrg {\textup{Irr}(G)}

\newcommand \ibrg {{\textup{IBr}}_p(G)}

\newcommand \ngp {{\bf{N}}_G(P)}

\newcommand \ngq {{\bf{N}}_G(Q)}
\newcommand \ngqd {{\bf{N}}_G(Q, \delta)}

\newcommand \dvd {\hbox {\big|}}
\newcommand \ndvd {\hbox {/}\kern-5pt\dvd}
\newcommand \nrml {\lhd}
\def \< {\langle}
\def \> {\rangle}

\newcommand \ntq {{\bf{N}}_T(Q)}

\newcommand \opi {{\bf{O}}_{\pi}}
\newcommand \opig {{\bf{O}}_{\pi}(G)}
\newcommand \opid {{\bf{O}}_{\pi'}}

\def\cent#1#2{{\bf C}_{#1}(#2)}

\def\ker#1{{\rm ker} (#1)}

\def\norm#1#2{{\bf{N}}_{#1} (#2)}

\def\B#1#2{{\rm B}_{#1} (#2)}
\def\Bpi#1{\B {\pi}{#1}}

\def\I#1#2{{\rm I}_{#1} (#2)}

\def\phi{\varphi}

\journal{Journal of Algebra}

\begin{document}

\begin{frontmatter}



\title{Counting lifts of Brauer characters}


\author{James P. Cossey}

\address{Department of Theoretical and Applied Mathematics, University of Akron, Akron, OH 44325}
\ead{cossey@uakron.edu}

\author{Mark L. Lewis}

\address{Department of Mathematical Sciences, Kent State University, Kent, OH 44242}
\ead{lewis@math.kent.edu}

\begin{abstract}
In this paper we examine the behavior of lifts of Brauer characters in solvable groups.  In the main result, we show that if $\varphi \in \ibrg$ is a Brauer character of a solvable group such that $\varphi$ has an abelian vertex subgroup $Q$, then the number of lifts of $\varphi$ in $\irrg$ is at most $|Q|$.  In order to accomplish this, we develop several results about lifts of Brauer characters in solvable groups that were previously only known to be true in the case of groups of odd order.

\end{abstract}

\begin{keyword}
Brauer character \sep Finite groups \sep Representations \sep Solvable groups



\MSC 20C20 \ 20C15

\end{keyword}

\end{frontmatter}


\section{Introduction}

Let $G$ be a group, and let $p$ be a prime.  We write $\ibrg$ for the set of irreducible $p$-Brauer characters of $G$.  Let $G^o$ be the $p$-regular elements of $G$.  If $\chi$ is a character of $G$, then we use $\chi^o$ to denote the restriction of $\chi$ to $G^o$.  Given a $p$-Brauer character $\varphi \in \ibrg$, we say that $\chi \in \irrg$ is a {\it lift} of $\varphi$ if $\chi^o = \varphi$.  When $G$ is $p$-solvable, the Fong-Swan theorem shows that $\varphi$ has a lift.  Much of the study of lifts has focused on particular canonical sets of lifts \cite{pisep}.  The first author has initiated a study of all lifts of $\varphi$.  For example, when $|G|$ is odd, he has shown that the number of lifts of $\varphi$ can be bounded in terms of a vertex subgroup for $\varphi$.

In a $p$-solvable group, we say $Q$ is a {\it vertex} for $\varphi$ if there is a subgroup $U$ of $G$ so that $\varphi$ is induced from a $p$-Brauer character of $U$ having $p'$-degree and $Q$ is a Sylow $p$-subgroup of $U$.  It is known \cite{IsNa} that all of the vertices for $\varphi$ are conjugate in $G$.  In \cite{bounds}, the first author showed that if $|G|$ is odd and $Q$ is a vertex for $\varphi$, then the number of lifts of $\varphi$ is at most $|Q:Q'|$.

In this paper, we wish to remove the hypothesis that $|G|$ is odd.  However, we do need an oddness hypothesis for our work, and so, we assume that $G$ is $p$-solvable where $p$ is an odd prime.  We will present an example that justifies this oddness hypothesis.  Also, at this time, we need to add the hypothesis that $Q$ be abelian.  We do not have any examples justifying this hypothesis, and in fact, we would not be surprised if this can be removed.  We will discuss one possible way to remove this hypothesis.  In particular, we prove the following.

\begin{thm} \label{thmA}
Let $G$ be a $p$-solvable group and let $p$ be an odd prime.  If $\varphi \in \ibrg$ has abelian vertex subgroup $Q$, then the number of lifts of $\varphi$ is at most $|Q|$.
\end{thm}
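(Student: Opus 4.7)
The plan is to prove Theorem~\ref{thmA} by strong induction on $|G|$. By definition of the vertex, there exist $U \leq G$ and $\psi \in \ibr(U)$ with $\psi(1)$ coprime to $p$ and $Q \in \mathrm{Syl}_p(U)$ such that $\varphi = \psi^G$. I would treat the cases $U < G$ and $U = G$ separately, with the former as the inductive step and the latter as the base case.

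In the case $U < G$, the goal is to apply the technical extensions (from the odd-order case to the $p$-odd setting) that the abstract alludes to, in order to establish an inducing-lifts correspondence: every $\chi \in \irrg$ that lifts $\varphi$ should arise as $\eta^G$ for some lift $\eta$ of $\psi$ in $U$, and distinct such $\eta$ should give distinct $\chi$. Since $Q$ remains a vertex of $\psi$ in $U$ and is still abelian, the inductive hypothesis applied to $U$ gives at most $|Q|$ lifts of $\psi$, hence at most $|Q|$ lifts of $\varphi$.

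The base case $U = G$ reduces to bounding the lifts of a $p'$-degree Brauer character $\varphi$ whose vertex $Q$ is an abelian Sylow $p$-subgroup of $G$; every lift of such a $\varphi$ also has $p'$-degree. Here I would combine Clifford-theoretic reductions with the theory of $\pi$-special characters and lifts: picking a minimal normal subgroup $N$ of $G$ and a Brauer constituent $\theta$ of $\varphi_N$, either the inertia subgroup $T = I_G(\theta)$ is proper in $G$ (in which case we induct over $T$ via the Clifford correspondence for lifts) or $T = G$, and we exploit the resulting structural constraint to either descend to a quotient (if $N$ is a $p$-group) or to work inside the stabilizer of an ordinary extension of $\theta$ (if $N$ is a $p'$-group). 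In the fully primitive situation, the $|Q|$ bound should be read off from the $|Q|$ linear characters of the abelian group $Q$ via a Glauberman--Isaacs-type correspondence between $p'$-degree lifts and characters of a suitable $p$-local subgroup.

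The main obstacle, as flagged in the abstract, is establishing the inducing-lifts technology without the odd-order hypothesis, since the corresponding arguments in \cite{bounds} relied on properties available only for groups of odd order. The abelian hypothesis on $Q$ is likely essential both for the character-extension arguments (extensions from an abelian $Q$ being well-understood) and for obtaining the sharper bound $|Q|$; if the abelian hypothesis could be dropped, the expected bound would become $|Q : Q'|$, matching the odd-order case.
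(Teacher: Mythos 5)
Your inductive step contains the essential gap. From the vertex pair $(U,\psi)$ with $\psi^G=\varphi$ you assert a surjective ``inducing-lifts correspondence'': every lift $\chi\in\irrg$ of $\varphi$ should equal $\eta^G$ for some lift $\eta\in\irr(U)$ of $\psi$. Induction does map lifts of $\psi$ to lifts of $\varphi$ (since $(\eta^G)^o=(\eta^o)^G=\psi^G=\varphi$ forces $\eta^G$ irreducible), but the surjectivity you need for the bound $|L_\varphi|\le|L_\psi|\le|Q|$ is exactly the unproven hard point: $U$ is merely some subgroup carrying a $p'$-degree Brauer character inducing $\varphi$, not an inertia group of a character of a normal subgroup, so no Clifford correspondence is available for it, and there is no reason a given lift $\chi$ is induced from $U$ at all. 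The subgroups from which a lift is induced with $\pi$-factored character (its generalized nuclei) come from a normal-nucleus construction and need not be conjugate to $U$; the only thing one can control is the vertex pair $(Q,\delta)$, and even the conjugacy of those is a theorem (Theorem \ref{second main}), not a formality. The base case is likewise only a gesture: the appeal to a ``Glauberman--Isaacs-type correspondence'' is not substantiated, and nothing in the outline explains where the numerical bound $|Q|$ actually enters.

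The paper proceeds quite differently, and the difference is the content of the proof. Navarro's result (Lemma \ref{navarro}, via Lemma \ref{lemma 2.3}) shows the vertex character $\delta$ of any lift is linear, and Theorem \ref{second main} shows all generalized vertices of a given lift are conjugate; this lets one partition the lifts of $\varphi$ by generalized vertex $(Q,\delta)$ with $\delta$ running over $\ngq$-orbit representatives of linear characters of $Q$. The core estimate is Theorem \ref{avhardpart}, $|L_\varphi(Q,\delta)|\le|\ngq:\ngqd|$, proved by induction on $|G|$ using two correspondences that are taken relative to normal subgroups, where Clifford theory does apply: Lemma \ref{nicase} (Clifford correspondents over a constituent of $\varphi_N$) and Lemma \ref{pidashcase} (the stabilizer of the $\pi'$-special character $\beta$ of $N$ determined by $\delta$), together with structural input --- Hall--Higman 1.2.3 to force an abelian $Q$ into ${\bf O}_{\pi\pi'}(G)$, a Frattini argument, and extensions of $\delta$ and of the $\pi$-special constituent to ${\bf O}_{\pi\pi'}(G)$. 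Summing over the orbit representatives gives $|L_\varphi|\le|Q:Q'|=|Q|$. Note also that the abelian hypothesis is not what converts $|Q:Q'|$ into $|Q|$ (the proof yields $|Q:Q'|$ in any case where it applies); it is used to guarantee $Q\le{\bf O}_{\pi\pi'}(G)$ so that the stabilizer argument goes through, contrary to the heuristic in your closing remark.
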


To prove this theorem, we consider the generalized vertices defined by in \cite{vertodd}.  To do this, we need the theory of $p$-factored characters (see \cite{pisep} for much more on factored characters).  A character is {\it $p$-factored} if it is the product of a $p$-special character and a $p'$-special character.  Let $\chi \in \irrg$.  Then $(Q,\delta)$ is a {\it generalized vertex} for $\chi$ if there is a subgroup $U$ with a $p$-factored character $\psi \in \irr(U)$ and Sylow $p$-subgroup $Q$ of $U$ so that $\psi^G = \chi$ and $\delta$ is the restriction to $Q$ of the $p$-special factor of $\psi$.

In general, there is little that one can say about the set of generalized vertices for a character $\chi \in \irrg$.  However, when $|G|$ is odd and $\chi$ is a lift of a Brauer character with generalized vertex $(Q,\delta)$, the first author proved in \cite{vertodd} that $\delta$ is linear and all the generalized vertices for $\chi$ are all conjugate to $(Q,\delta)$.

Again, we are interested in this paper in removing the hypothesis that $|G|$ is odd.  Again, we will need to replace these with the hypotheses that $G$ is $p$-solvable and $p$ is odd (though we do not require here that the vertex subgroup $Q$ is abelian). First, we use a theorem of Navarro \cite{vertodd} to see that under these hypotheses, if $\chi$ is the lift of a Brauer character with vertex $(Q,\delta)$, then $\delta$ is linear.  With this result in hand, we are able to prove the following.

\begin{thm}\label{thmB}
Let $G$ be a $p$-solvable group, and let $p$ be an odd prime.  If $\chi \in \irrg$ is the lift of a $p$-Brauer character, then all of the generalized vertices for $\chi$ are conjugate.
\end{thm}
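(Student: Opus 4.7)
The plan is to proceed by induction on $|G|$, following the template of the proof in \cite{vertodd} for odd-order groups. A careful reading of that argument shows that the hypothesis ``$|G|$ odd'' is used only to force the $p$-special part $\delta$ of any generalized vertex $(Q,\delta)$ of $\chi$ to be a linear character. Since the paragraph preceding Theorem B supplies this linearity for $p$-solvable $G$ with $p$ odd via Navarro's theorem, the remainder of the scheme should go through once the Clifford-theoretic reductions are verified in the broader setting.

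Concretely, let $(Q_1,\delta_1)$ and $(Q_2,\delta_2)$ be two generalized vertices for $\chi$, realized by $p$-factored characters $\psi_i = \alpha_i \beta_i \in \irr(U_i)$ with $\psi_i^G = \chi$, $Q_i$ Sylow in $U_i$, and $\delta_i = (\alpha_i)|_{Q_i}$ linear. I would pick a minimal normal subgroup $N$ of $G$; by $p$-solvability, $N$ is either an elementary abelian $p$-group or a $p'$-group. For $\theta \in \irr(N)$ lying under $\chi$, let $T = I_G(\theta)$ and form the Clifford correspondent $\widetilde\chi \in \irr(T \mid \theta)$. One checks that $\widetilde\chi$ is again a lift of a Brauer character, and that the generalized vertices of $\chi$ are exactly the $G$-conjugates of those of $\widetilde\chi$; so if $T < G$, induction closes this case. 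If instead $T = G$, then $\theta$ is $G$-invariant: extend $\theta$ to a $p$- or $p'$-special character of $G$ via the Isaacs--Dade correspondence, factor $\chi$ through $G/N$, and apply induction to the quotient. The linearity of $\delta_i$ is what makes the restriction--extension of $\theta$ through $Q_i \cap N$ compatible with the $p$-factorization of $\psi_i$, so both reductions preserve the vertex data in the required way.

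The main obstacle will be the base case, where neither reduction is available: every $\chi$-constituent of every minimal normal subgroup is $G$-invariant and lies either in $\gpcen G$ or in $\ker\chi$, so that $\chi$ is essentially primitive. Here one must use the linearity of $\delta_1$ and $\delta_2$ directly, combined with the uniqueness of the $p$-factorization of characters of $p$-solvable groups from \cite{pisep}, to argue that both $\psi_i$ must in fact be characters of $G$ itself and must be $G$-conjugate. The delicate point is that the $p$-special factor $\alpha_i$ should be determined, up to the appropriate conjugacy, by its linear restriction $\delta_i$ to the Sylow $p$-subgroup $Q_i$ --- the analogue of a statement which in the odd-order setting is fairly direct, but which for $p$ odd with $|G|$ possibly even requires verifying that the $\pi$-special character machinery continues to yield the same uniqueness. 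Once that is in place, conjugacy of the pairs $(Q_i,\delta_i)$ follows immediately from conjugacy of $(U_i,\psi_i)$.
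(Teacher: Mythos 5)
Your overall strategy---induct on $|G|$, observe that Navarro's result replaces the odd-order hypothesis by forcing the vertex character $\delta$ to be linear, and then reduce via Clifford theory---is the right spirit, but the two steps you dispose of with ``one checks'' are exactly where the content of the theorem lies, and as you have set them up they do not go through. The claim that the generalized vertices of $\chi$ are precisely the $G$-conjugates of those of the Clifford correspondent $\widetilde\chi \in \irr(T \mid \theta)$ is the heart of the matter: an arbitrary nucleus $(U,\psi)$ with $\psi^G = \chi$ need not contain $N$, need not lie inside $T$, and need not lie over a conjugate of $\theta$ in any controlled way. To move such a pair into $T$ one needs the analogues of Lemma \ref{lemma 3.4} (that $|NU:U|$ is a $\pi$-number) and, much less trivially, Lemma \ref{lemma 3.5} (that $\psi^{NU}$ is still factored, which rests on extending the $\pi'$-special factor and on the restriction bijection of Corollary 4.2 of \cite{pisep}), followed by the Clifford-correspondent compatibility of Lemma \ref{cor 3.3}. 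None of this machinery appears in your outline, and without it the inductive step is an assertion, not a proof.

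Moreover, your choice of a \emph{minimal} normal subgroup forces you into the case $T = G$, and your treatment of that case is not viable: a $G$-invariant $\theta$ need not extend to $G$, and one cannot ``factor $\chi$ through $G/N$'' since $N$ is in general not contained in $\ker{\chi}$; in addition, generalized vertices are pairs attached to subgroups of $G$ and characters of Hall subgroups of $G$, so they do not transfer to a quotient in any straightforward way. The paper avoids the invariant case entirely by taking $N$ \emph{maximal} normal subject to the constituents of $\chi_N$ being factored and invoking Navarro's normal-nucleus construction, which guarantees that a constituent $\theta$ of $\chi_N$ is not $G$-invariant whenever $N < G$; when $N = G$ the character $\chi$ is itself factored, and Lemmas \ref{lemma 2.3} and \ref{lemma 3.1} pin down its vertex directly, so the quasi-primitive ``base case'' you describe (with both nuclei forced to equal $G$) is neither established by your argument nor needed in the correct one.
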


For a subgroup $Q \subseteq G$ and a character $\delta$ of $Q$, we will let $\ngqd$ denote the stabilizer of $\delta$ in $\ngq$.  The key step in proving Theorem \ref{thmA} is the following theorem where the number of lifts that have $(Q,\delta)$ is bounded.  A version of this theorem was also the key step in proving the upper bound on the number of lifts in the case that $G$ has odd order in \cite{bounds}.  Our approach in this paper to prove this theorem is very different from the approach used in \cite{bounds}.

\begin{thm}
Let $G$ be a $p$-solvable group, and let $p$ be an odd prime.  Suppose $\varphi \in \ibrg$ has an abelian vertex subgroup $Q$.  If $\delta \in \irr(Q)$, then the number of lifts of $\varphi$ having generalized vertex $(Q,\delta)$ is at most $|\ngq: \ngqd|$.
\end{thm}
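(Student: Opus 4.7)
The plan is to reduce to the stabilizer $L := \ngqd$ inside $M := \ngq$ via a Clifford-style correspondence, so that the desired bound $|M:L|$ is realized as the size of the $M$-orbit of $\delta$. The two key inputs are Theorem~\ref{thmB} (all generalized vertices of a fixed lift are $G$-conjugate) and the linearity of $\delta$ recalled in the introduction as a consequence of Navarro's theorem.

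For a lift $\chi$ of $\varphi$ with generalized vertex $(Q,\delta)$, write $\chi = (\alpha\beta)^G$, where $U \le G$ has $Q$ as a Sylow $p$-subgroup, $\alpha$ is a $p$-special character of $U$ with $\alpha|_Q = \delta$, and $\beta$ is $p'$-special with $\beta^o \in \ibr(U)$ of $p'$-degree and $(\beta^o)^G = \varphi$. Since $\delta$ is linear and $\alpha(1) = \alpha|_Q(1) = \delta(1) = 1$, $\alpha$ is itself linear. Using that $Q$ is abelian, a normalizer argument allows us to assume $Q$ is normal in $U$, so $U \le M$; the $U$-invariance of $\alpha|_Q = \delta$ then forces $U \le L$. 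Thus every inducing datum for a $(Q,\delta)$-gen-vertex lift of $\varphi$ may be taken inside $L$.

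Next I would set up a Clifford correspondence: for $\chi$ as above, let $\psi \in \irr(L)$ be the irreducible constituent of $\chi|_L$ lying over $\delta$, so that $\psi^G = \chi$. Using the uniqueness of the $p$-special extension of an invariant linear character (Gajendragadkar) and the preservation of the $p$-factored structure under Clifford induction, one shows that $\psi$ inherits $(Q,\delta)$ as a generalized vertex inside $L$, and that $\chi \mapsto \psi$ is injective from the set of lifts of $\varphi$ in $\irrg$ with gen vertex $(Q,\delta)$ into the set of lifts of a suitable Brauer-character correspondent $\hat\varphi \in \ibr(L)$ in $\irr(L)$ with the same gen vertex datum. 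Inside $L$, where $Q$ is normal and $\delta$ is $L$-invariant, counting lifts of $\hat\varphi$ with vertex datum $(Q,\delta)$ can be controlled by an $M/L$-action on inducing triples: the cosets of $L$ in $M$ act on the choices of $(U,\alpha,\beta)$ with the same $\delta$, producing at most $|M:L|$ distinct lifts.

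The main obstacle is the bidirectional verification that the generalized vertex $(Q,\delta)$ is preserved by the Clifford correspondence. Moving inducing data from $G$ down to $L$ requires the normalizer argument and Theorem~\ref{thmB}, so that the $(Q,\delta)$-realization is unique up to $L$-conjugacy; conversely, one must show that induction from $L$ to $G$ carries the $p$-factored data intact, so the induced lift retains $(Q,\delta)$ as a gen vertex rather than merely some $M$-conjugate of it. Once these are established, the $M/L$-orbit count inside $L$ yields the bound $|\ngq : \ngqd|$ and completes the proof.
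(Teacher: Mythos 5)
There is a genuine gap, and it sits at the heart of your argument. You claim that because $Q$ is abelian, ``a normalizer argument allows us to assume $Q$ is normal in $U$,'' so that every inducing pair $(U,\psi)$ for a lift with generalized vertex $(Q,\delta)$ can be taken inside $L = \ngqd$. No such reduction exists: $Q$ is only a Sylow $p$-subgroup of $U$, an abelian Sylow subgroup need not be normal, and there is no general procedure that replaces $(U,\psi)$ by a pair $({\bf{N}}_U(Q),\psi')$ still inducing $\chi$ irreducibly with the same factored structure. Your subsequent ``Clifford correspondence'' is equally problematic: $L = \ngqd$ is not normal in $G$, so Clifford theory gives neither a distinguished constituent $\psi$ of $\chi_L$ over $\delta$ with $\psi^G=\chi$, nor the injectivity you assert, nor a canonical Brauer correspondent $\hat\varphi \in \ibr{L}$. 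Finally, the closing ``$M/L$-orbit count on inducing triples'' is an assertion of the conclusion rather than a proof; nothing in your argument explains why distinct lifts sharing the vertex datum $(Q,\delta)$ must land in distinct cosets of $L$ in $\ngq$. That counting problem is precisely the difficulty; the paper's concluding remarks even note that the analogous inequality for an arbitrary stabilizer subgroup is open when $2\in\pi$, so it cannot simply be taken for granted.

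For comparison, the paper's proof of Theorem \ref{avhardpart} is an induction on $|G|$ using correspondences over genuinely \emph{normal} subgroups. Lemma \ref{factor} pins down the factored constituents of $\chi_N$ and shows $Q\cap N$ is a Sylow $p$-subgroup of $N$; Lemma \ref{nicase} handles the case where the partial character constituent over $N$ is not invariant (summing the inductive bounds over $\ntq$-orbits recovers exactly $|\ngq:\ngqd|$); and Lemma \ref{pidashcase} handles the case of a non-invariant $p$-special extension $\hat\delta$ over $M = {\bf O}_{\pi\pi'}(G)$, where the key inequality $m \le |G:I|$ is available. The abelian hypothesis enters not through any normalizer trick but through Hall--Higman's Lemma 1.2.3, which forces $Q \le {\bf O}_{\pi\pi'}(G)$ after the reductions; the conjugacy of generalized vertices (Theorem \ref{second main}) and the linearity of $\delta$ (Lemma \ref{lemma 2.3}) are used throughout. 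Your proposal correctly identifies these last two inputs, but the reduction to $\ngqd$ and the final count would need to be rebuilt along entirely different lines.
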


We note that if $p = 2$, one can find in ${\rm GL}_2 (3)$ a lift of a Brauer character whose vertex is not linear.

\section{Generalized vertices}

Rather than work with Brauer characters, we work in the context of Isaacs' partial characters.  Hence, we will have a set of primes $\pi$.  To define the $\pi$-partial characters, one needs to assume that $G$ is $\pi$-separable.  As in the context of Brauer characters, we let $G^o$ denote the set of $\pi$-elements in $G$.  Given an ordinary character $\chi$, we use $\chi^o$ to denote the restriction of $\chi$ to $G^o$.  The $\pi$-partial characters of $G$ are the functions defined on $G^o$ that are restrictions of ordinary characters.  The $\pi$-partial characters that cannot be written as the sum of two other partial characters are called irreducible.  We use $\ipig$ to denote the irreducible $\pi$-partial characters of $G$.  For a full exposition on $\pi$-partial characters, we refer the reader to \cite{pisep} and \cite{pipart}.

The irreducible $\pi$-partial characters of $G$ have many properties in common with the irreducible Brauer characters of a $p$-solvable group.  In fact, if $\pi = p'$, then $\ipig = \ibrg$, and the requirement that $p$ is odd is equivalent
to $2 \in \pi$.  Therefore, we have the assumption that $2 \in \pi$.  For example, we can define induction of partial characters from subgroups.  Given an irreducible $\pi$-partial character $\phi$ of $G$, we can define a vertex $Q$ for $\phi$ to be a Hall $\pi'$-subgroup of a subgroup $U$ that contains a $\pi$-partial character $\kappa$ of $\pi$-degree that induces $\phi$.  Isaaacs and Navarro have proved in \cite{IsNa} that all of the vertices for $\phi$ are conjugate in $G$.  (A different proof of this fact is in \cite{istan}.)  There also exists a Clifford correspondence for $\pi$-partial characters.  If $G$ is $\pi$-separable and $N \nrml G$ and $\theta \in \ipi(N)$, then induction is a bijection from the set $\ipi(G_{\theta} \mid \theta)$ to $\ipi(G \mid \theta)$ (see \cite{Fong}).

We also need to consider $\pi$-special characters.  Let $G$ be a $\pi$-separable group.  A character $\chi \in \irr G$ is $\pi$-special if $\chi (1)$ is a $\pi$-number and for every subnormal group $M$ of $G$, the irreducible constituents of $\chi_M$ have $\pi$-order.  Many of the basic results of $\pi$-special characters can be found in Section 40 of \cite{hupte} and Chapter VI of \cite{Mawo}.  One result that is proved is that if $\alpha$ is $\pi$-special and $\beta$ is $\pi'$-special, then $\alpha \beta$ is necessarily irreducible. Furthermore, if $\alpha'$ is $\pi$-special and $\beta'$ is $\pi$-special so that $\alpha' \beta' = \alpha \beta$, then $\alpha' = \alpha$ and $\beta' = \beta$.  We say that $\chi$ is $\pi$-factored (or factored, if the $\pi$ is clear from context) if $\chi = \alpha \beta$.  Another result is that if $H$ is a Hall $\pi$-subgroup of $G$, then restriction defines an injection from the $\pi$-special characters of $G$ into $\irr (H)$.

Following the terminology introduced in \cite{vertodd}, we say $(Q,\delta)$ is a {\it generalized $\pi$-vertex} for $\chi \in \irrg$ if there exists a pair $(U,\psi)$ (where $U \subseteq G$ and $\psi \in \irr(U)$) so that $\psi^G = \chi$, $Q$ is a Hall $\pi$-complement of $U$, $\psi = \alpha \beta$ where $\alpha$ is $\pi$-special and $\beta$ is $\pi'$-special, and $\beta_Q = \delta$.  In this context, we say that $(U,\psi)$ is a {\it generalized $\pi$-nucleus} for $\chi$.

In \cite{vertodd}, the first author proved that if $|G|$ is odd and $\chi \in \irrg$ with $\chi^o \in \ipig$, then the generalized $\pi$-vertices for $\chi$ are conjugate. We now show that the hypothesis that $|G|$ is odd can be replaced by the hypothesis that $G$ is $\pi$-separable and $2 \in \pi$.  Our argument will parallel the argument in \cite{vertodd}.

The main result, which is the $\pi$-version of Theorem \ref{thmB} is the following.

\begin{theorem}\label{second main}
Let $\pi$ be a set of primes with $2 \in \pi$, and let $G$ be a $\pi$-separable group. If $\chi \in \irrg$ with $\chi^o \in \ipig$, then all the generalized $\pi$-vertices for $\chi$ are conjugate.
\end{theorem}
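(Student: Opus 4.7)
The plan is to follow the template of the proof in \cite{vertodd} for groups of odd order, replacing the one place where oddness was used -- the fact that the $\pi'$-special character $\delta$ in a generalized $\pi$-vertex $(Q,\delta)$ of a lift is linear -- with Navarro's theorem already cited in the excerpt, which delivers linearity of $\delta$ whenever $G$ is $\pi$-separable and $2 \in \pi$. Once this linearity is in hand, the remaining steps of the \cite{vertodd} argument are purely representation-theoretic and should go through without using $|G|$ odd.

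I would proceed by induction on $|G|$. Pick two generalized $\pi$-vertices $(Q_1,\delta_1)$ and $(Q_2,\delta_2)$ of $\chi$, witnessed by nuclei $(U_i,\psi_i)$ with $\pi$-factorizations $\psi_i = \alpha_i \beta_i$; each $\delta_i = (\beta_i)_{Q_i}$ is linear by Navarro's theorem. In the easy case $U_1 = U_2 = G$, the character $\chi$ is itself $\pi$-factored; uniqueness of the $\pi$-factorization forces $\alpha_1 = \alpha_2$ and $\beta_1 = \beta_2$, and then conjugacy of the Hall $\pi'$-subgroups of $G$ delivers conjugacy of $(Q_1,\delta_1)$ and $(Q_2,\delta_2)$.

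When some $U_i$ is proper, I would reduce using a normal subgroup. Taking a minimal normal subgroup $N$ of $G$ (necessarily a $\pi$-group or a $\pi'$-group by $\pi$-separability), I combine the ordinary Clifford correspondence for $\chi \in \irrg$ with the Clifford correspondence for $\pi$-partial characters recalled above. Each generalized nucleus $(U_i,\psi_i)$ pulls back to a generalized nucleus for a Clifford correspondent $\theta \in \irr(G_\vartheta \mid \vartheta)$ with $\theta^G = \chi$. If the inertia subgroup $G_\vartheta$ is proper in $G$, induction inside $G_\vartheta$ supplies $G_\vartheta$-conjugacy of the two pullback vertices, which transports to $G$-conjugacy of $(Q_1,\delta_1)$ and $(Q_2,\delta_2)$ by standard properties of character induction. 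If instead $\chi_N = e\vartheta$ for a single $\vartheta$ with $G_\vartheta = G$, I would deflate modulo a central subgroup of $N$ lying in the kernel of the appropriate factor and then apply induction to the quotient, adjusting the $\pi$-factorization accordingly.

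The main obstacle I anticipate is the bookkeeping: tracking the $\pi$- and $\pi'$-special factors, and in particular the restriction $(\beta_i)_{Q_i}$, through these Clifford-type reductions, especially when $N$ is a $\pi$-group that interacts with $\alpha_i$ rather than with $\beta_i$. Linearity of $\delta_i$ is precisely what makes this tractable, since it reduces comparison of two candidate vertices to scalar-valued data on $Q$; a higher-degree $\delta$ could in principle package conjugation-inequivalent information inside a single generalized vertex. Once Navarro's linearity is used to align $\delta_1$ and $\delta_2$, the remainder of the argument should parallel \cite{vertodd} very closely.
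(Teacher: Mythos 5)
Your overall strategy (induction on $|G|$, Clifford-type reductions, Navarro's result supplying linearity of $\delta$) is the same as the paper's, but two of your steps have genuine gaps. First, the assertion that each generalized nucleus $(U_i,\psi_i)$ ``pulls back to a generalized nucleus for a Clifford correspondent \dots by standard properties of character induction'' is precisely the hard content of the theorem, not bookkeeping. A nucleus $U$ need not contain $N$ and need not lie over your chosen constituent $\vartheta$ of $\chi_N$; before any Clifford correspondence can be invoked you must show that $|NU:U|$ is a $\pi$-number and, crucially, that $\psi^{NU}$ is again $\pi$-factored with its $\pi'$-special factor under control (the paper's Lemmas \ref{lemma 3.4} and \ref{lemma 3.5}, whose proof is a minimal-counterexample argument resting on Lemma \ref{lemma 3.2}, the ${\rm B}_\pi$/magic field automorphism machinery of \cite{odd}, and the restriction bijection of Corollary 4.2 of \cite{pisep}), and then that the Clifford correspondent of a $\pi$-factored character of $\pi$-degree is again $\pi$-factored (Lemma \ref{cor 3.3}). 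Induction of a factored character is not in general factored, so this step cannot be waved through; it is also where the hypothesis $2\in\pi$ re-enters beyond the single linearity statement, so the claim that oddness was used in ``one place'' undersells what has to be redone.

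Second, your choice of a \emph{minimal} normal subgroup $N$ creates a case your sketch cannot handle. If $\chi_N=e\vartheta$ with $\vartheta$ invariant in $G$, the Clifford reduction does not shrink the group, and the proposed fix --- deflating modulo a central subgroup of $N$ in the kernel of a factor --- fails in general: $\pi$-separable groups need not be solvable, so $N$ may be a product of nonabelian simple groups with no useful central subgroup, and even in the abelian case $\vartheta$ may be faithful; moreover passing to a quotient does not obviously preserve the data $(Q,\delta)$ and the $\pi$-special/$\pi'$-special factorizations. The paper avoids this entirely by taking $N$ \emph{maximal} such that the constituents of $\chi_N$ are $\pi$-factored: by Navarro's normal nucleus construction \cite{navvert}, if $\chi$ is not factored then such a constituent $\theta$ is never $G$-invariant, so $G_\theta<G$ and the inductive step always strictly reduces, while the factored case is settled first by Lemmas \ref{lemma 2.3} and \ref{lemma 3.1} together with the Isaacs--Navarro conjugacy of vertex subgroups. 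Note also that your ``easy case'' $U_1=U_2=G$ is narrower than the case actually needed, namely ``$\chi$ is $\pi$-factored with proper nuclei allowed,'' which already requires Lemma \ref{lemma 3.1}. To repair the proposal you would need to import essentially the paper's Lemmas \ref{lemma 3.2}--\ref{lemma 3.5} and switch to the maximal-$N$/normal-nucleus framework.
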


The key to our work is a recent result of Navarro.  Replacing $p$ by a set of primes $\pi$ with $2 \in \pi$, the proof of Lemma 2.1 of \cite{newnav} proves:

\begin{lemma}\label{navarro}
Let $\pi$ be a set of primes with $2 \in \pi$, and let $G$ be a $\pi$-separable group.  Let $\chi \in \irrg$ be $\pi'$-special.  If $\chi (1) > 1$, then $\chi^o$ is not in $\ipig$.
\end{lemma}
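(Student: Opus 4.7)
The plan is to mirror the inductive argument of Lemma~2.1 in \cite{newnav}, with $p$ and $p'$ replaced throughout by $\pi'$ and $\pi$ respectively. I argue by induction on $|G|$, supposing $\chi \in \irrg$ is $\pi'$-special with $\chi(1) > 1$ and, for contradiction, that $\chi^o \in \ipig$. The first step is to reduce to the case $\opi(G) = 1$: since $\chi$ is $\pi'$-special, each irreducible constituent of $\chi_{\opi(G)}$ is itself $\pi'$-special, but any $\pi'$-special character of a $\pi$-group must be trivial (its degree and determinantal order would be simultaneously $\pi$- and $\pi'$-numbers). Hence $\opi(G) \le \ker\chi$ and $\chi$ descends to a $\pi'$-special character of degree $>1$ on the $\pi$-separable quotient $G/\opi(G)$. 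Since $\opi(G)$ is a normal $\pi$-subgroup, the $\pi$-elements of the quotient lift canonically to $\pi$-elements of $G$, and this correspondence preserves the irreducibility of the resulting $\pi$-partial character, so the inductive hypothesis resolves the case $\opi(G) > 1$.

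So assume $\opi(G) = 1$; $\pi$-separability then gives $N := \opid(G) \neq 1$. Pick an irreducible constituent $\theta \in \irr(N)$ of $\chi_N$, which is again $\pi'$-special, and set $T = I_G(\theta)$. If $T < G$, ordinary Clifford theory furnishes a unique $\pi'$-special $\psi \in \irr(T \mid \theta)$ with $\psi^G = \chi$. Because $\chi^o = (\psi^o)^G$, any decomposition $\psi^o = \eta_1 + \eta_2$ into nonzero partial characters of $T$ would produce $\chi^o = \eta_1^G + \eta_2^G$, contradicting $\chi^o \in \ipig$; hence $\psi^o \in \ipi(T)$, and the inductive hypothesis applied to $(T,\psi)$ forces $\psi(1) = 1$. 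If instead $T = G$, the canonical extension theorem for $\pi'$-special characters in $\pi$-separable groups supplies a $\pi'$-special $\hat\theta \in \irr(G)$ extending $\theta$, and Gallagher's correspondence writes $\chi = \hat\theta \cdot \beta$ with $\beta \in \irr(G/N)$ forced, by uniqueness of the $\pi$-factorization, to be $\pi'$-special on $G/N$. If $\beta(1) > 1$, the inductive hypothesis applied in $G/N$ yields $\beta^o$ reducible; using that the pointwise product of $\hat\theta^o$ with any irreducible $\pi$-partial constituent of $\beta^o$ is again a partial character (via a Fong--Swan lift), we conclude that $\chi^o = \hat\theta^o \cdot \beta^o$ is reducible, a contradiction.

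The main obstacle is the residual configuration, in which the above reductions cannot be iterated: either $\chi = \psi^G$ is induced from a linear $\pi'$-special character $\psi$ of a proper subgroup, or $\chi = \hat\theta \cdot \beta$ with $\beta$ linear, so that $\chi_N = \theta$ is irreducible of degree greater than $1$. Here the hypothesis $2 \in \pi$ is essential: the argument of \cite{newnav} rules out these residual cases by a character-theoretic argument that uses the fact that involutions of $G$ are $\pi$-elements (equivalent to $2 \in \pi$), combined with parity and determinantal constraints tied to $\pi'$-special characters of odd degree. I would import this final step directly from \cite{newnav}. The fact that $2 \in \pi$ cannot be dropped is consistent with the counterexample in $\mathrm{GL}_2(3)$ pointed out in the introduction.
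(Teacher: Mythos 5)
There is a genuine gap, and it sits exactly where the content of the lemma lies. The paper offers no new argument for this statement: it simply records that Navarro's proof of Lemma 2.1 of \cite{newnav} goes through when the prime is replaced by a set of primes $\pi$ with $2 \in \pi$. Your proposal also ends by ``importing the final step directly from \cite{newnav}'', but only after reductions of your own, and the configuration you leave to that import --- $\chi$ induced from a linear $\pi'$-special character of a proper subgroup with $\chi^o$ irreducible --- is not a residual corner case. Since $\chi(1) = \chi^o(1)$ is a $\pi'$-number, the Isaacs--Navarro vertex machinery already expresses $\chi^o$ as induced from a \emph{linear} $\pi$-partial character of a subgroup of index $\chi(1)$, so every putative (in particular every minimal) counterexample has essentially the shape you defer. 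Thus your reductions do not narrow the problem, the only place where the hypothesis $2 \in \pi$ enters is never argued, and your description of what the imported step does (``parity and determinantal constraints'', ``involutions are $\pi$-elements'') is a guess rather than a proof; nor is there any guarantee that Navarro's closing argument meshes with your reduction rather than with his own setup.

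Independently of this, the $T = G$ branch is flawed as written. There is no ``canonical extension theorem'' producing a $\pi'$-special extension to $G$ of a $G$-invariant $\theta \in \irr(N)$ with $N = \opid(G)$: the standard extension theorems need $|G/N|$ coprime to $o(\theta)\theta(1)$, i.e.\ in effect that $G/N$ be a $\pi$-group, and $G/\opid(G)$ need not be one; invariant characters of normal subgroups can fail to extend at all. Even granting an extension $\hat\theta$, Gallagher gives $\chi = \hat\theta\beta$ with $\beta \in \irr(G/N)$, but uniqueness of $\pi$-factorization does not make $\beta$ ($\pi$- or) $\pi'$-special, since $\hat\theta\beta$ is not presented as a product of a $\pi$-special with a $\pi'$-special character; and the concluding claim that $\chi^o = \hat\theta^o \beta^o$ is reducible because $\beta^o$ is, is unsupported --- $\pi$-partial characters do not enjoy the multiplicative factorization properties of special characters, and controlling exactly such products is part of what the lemma is for. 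The steps you do carry out correctly (triviality of the constituents on $\opig$, and the Clifford reduction when $T < G$, where the correspondent is $\pi'$-special because $|G:T|$ divides $\chi(1)$ and one can quote Theorem C of \cite{induct}) are fine but easy; to repair the write-up you should either cite Navarro's lemma and its proof wholesale, as the paper does, or supply a genuine self-contained argument for the induced-from-a-linear-character configuration, which is where all the difficulty lives.
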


For the remainder of this section, our work will parallel the work in \cite{vertodd}.  The following should be compared with Lemma 2.3 of \cite{vertodd}.

\begin{lemma} \label{lemma 2.3}
Let $\pi$ be a set of primes with $2 \in \pi$, and let $G$ be a $\pi$-separable group.  Let $\chi \in \irrg$ with $\chi^o \in \ipig$.  If $U \le G$ and $\psi \in \irr(U)$ is a $\pi$-factored character that induces $\chi$, then the $\pi'$-special factor of $\psi$ is linear.  Moreover, if $Q$ is a Hall $\pi$-complement of $U$, then $Q$ is a vertex subgroup of $\chi^o$.
\end{lemma}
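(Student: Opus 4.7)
The plan is to combine Lemma~\ref{navarro} with the following promotion of the hypothesis: since $\psi^G = \chi$ and restriction to $\pi$-elements commutes with induction of partial characters, we have $(\psi^o)^G = \chi^o$; decomposing $\psi^o = \sum c_i \phi_i$ in $\ipi(U)$ with $c_i \in \Z_{\ge 0}$ yields $\chi^o = \sum c_i \phi_i^G$, and since each $\phi_i^G$ further decomposes as a non-negative integer combination of elements of $\ipig$, the irreducibility of $\chi^o$ forces $\psi^o$ to equal a single $\phi \in \ipi(U)$ with $\phi^G = \chi^o$.

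Next, write $\psi = \alpha \beta$ with $\alpha$ $\pi$-special and $\beta$ $\pi'$-special. On $\pi$-elements of $U$ we have the pointwise identity $\phi = \psi^o = \alpha^o \cdot \beta^o$. By the standard theory of $\pi$-special characters in a $\pi$-separable group, $\alpha^o \in \ipi(U)$. Writing $\beta^o = \sum d_j \eta_j$ with $\eta_j \in \ipi(U)$ and $d_j \in \Z_{\ge 0}$, we obtain $\phi = \sum d_j (\alpha^o \eta_j)$. Each summand $\alpha^o \eta_j$ is a non-zero $\pi$-partial character of $U$, and so the total irreducible multiplicity of $\phi$ is at least $\sum d_j$. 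Irreducibility of $\phi$ therefore forces $\beta^o$ itself to lie in $\ipi(U)$. Since $U$ is $\pi$-separable (as a subgroup of the $\pi$-separable group $G$) and $\beta \in \irr(U)$ is $\pi'$-special, Lemma~\ref{navarro} then yields $\beta(1) = 1$, as required.

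For the moreover statement, since $\beta$ is now linear, $\phi(1) = \psi(1) = \alpha(1)\beta(1) = \alpha(1)$ is a $\pi$-number, so the pair $(U,\phi)$ with $\phi \in \ipi(U)$ of $\pi$-degree and $\phi^G = \chi^o$ exhibits the Hall $\pi$-complement $Q$ of $U$ as a vertex of $\chi^o$. The main subtle point I anticipate is the first paragraph's claim that $\psi^o$ itself lies in $\ipi(U)$ (rather than merely having its induction irreducible); this is a clean consequence of the fact that induction of partial characters preserves the decomposition into non-negative integer combinations of irreducibles.
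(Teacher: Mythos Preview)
Your argument is correct and follows essentially the same route as the paper: reduce to $\psi^o \in \ipi(U)$, factor $\psi = \alpha\beta$, deduce $\beta^o \in \ipi(U)$, and invoke Lemma~\ref{navarro}. The paper simply asserts the steps ``$\psi^o \in \ipi(U)$'' and ``$\beta^o \in \ipi(U)$'' without justification, whereas you have spelled out the routine multiplicity-counting arguments behind them; for the vertex conclusion the paper cites Theorem~B of \cite{IsNa}, which amounts to the definition you invoke directly.
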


\begin{proof}
Note that since $\chi^o \in \ipig$, and $\psi^G = \chi$, then
$\psi^o \in \ipi(U)$.  Since $\psi$ is $\pi$-factored, we have $\psi = \alpha \beta$ where $\alpha$ is $\pi$-special and $\beta$ is $\pi'$-special.  It follows that $\beta^o \in \ipi(U)$.  By Lemma \ref{navarro}, $\beta (1) = 1$.  It follows that $\psi$ has $\pi$-degree and $\psi^o \in \ipi(U)$.  By Theorem B of \cite{IsNa}, $Q$ is a vertex subgroup of $\chi^o$.
\end{proof}

The next lemma is similar to Lemma 3.1 of \cite{vertodd}.

\begin{lemma} \label{lemma 3.1}
Let $\pi$ be a set of primes with $2 \in \pi$, and let $G$ be a $\pi$-separable group.  Let $\chi \in \irrg$ with $\chi = \alpha \beta$ where $\alpha$ is $\pi$-special and $\beta$ is linear and $\pi'$-special.  Suppose $\psi \in \irr(U)$ is $\pi$-factored and induces $\chi$.  If $\delta$ is the $\pi'$-special factor of $\psi$, then $\beta_U = \delta$.
\end{lemma}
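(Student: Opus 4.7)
The plan is to identify $\psi$ as an irreducible constituent of $\chi_U$ whose natural $\pi$-factorization already has $\pi'$-part $\beta_U$, and then appeal to the uniqueness of $\pi$-factorization of $\psi$ to conclude $\delta = \beta_U$.

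First, since $\psi^G = \chi$, Frobenius reciprocity gives $[\chi_U,\psi] = [\chi,\psi^G] \geq 1$, so $\psi$ is an irreducible constituent of $\chi_U = \alpha_U\beta_U$.

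Next, I verify that every irreducible constituent of $\chi_U$ has the form $\gamma \cdot \beta_U$ for some $\pi$-special $\gamma \in \irr(U)$. By the Gajendragadkar restriction theorem for $\pi$-special characters (Chapter VI of \cite{Mawo}), every irreducible constituent of $\alpha_U$ is $\pi$-special in $U$. Writing $\alpha_U = \sum_i c_i \gamma_i$ with the $\gamma_i$ irreducible and $\pi$-special, we have
\[
\chi_U \;=\; \alpha_U \cdot \beta_U \;=\; \sum_i c_i\,(\gamma_i \beta_U).
\]
Because $\beta$ is linear and $\pi'$-special in $G$, the restriction $\beta_U$ is a linear character of $\pi'$-order, hence $\pi'$-special in $U$. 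By the result recalled in the preliminaries that the product of a $\pi$-special and a $\pi'$-special character is irreducible, each $\gamma_i \beta_U$ is irreducible and its canonical $\pi$-factorization is exactly $\gamma_i \cdot \beta_U$. In particular, distinct $\gamma_i$ yield distinct irreducible constituents, because multiplication by the linear character $\beta_U$ is injective on $\irr(U)$.

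Consequently $\psi = \gamma_j \beta_U$ for some index $j$. Writing the given $\pi$-factorization of $\psi$ as $\psi = \mu\delta$ with $\mu$ the $\pi$-special factor, the uniqueness of $\pi$-factorization applied to $\mu\delta = \gamma_j \beta_U$ forces $\mu = \gamma_j$ and $\delta = \beta_U$, which is the desired conclusion. The only ingredient beyond what the excerpt has already assembled is Gajendragadkar's fact that restriction preserves $\pi$-specialness at the level of constituents; the rest is bookkeeping, so I do not anticipate a serious obstacle.
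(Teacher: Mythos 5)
Your endgame (uniqueness of $\pi$-factorization) is fine, but the step it rests on is false as stated: it is not true that the irreducible constituents of the restriction $\alpha_U$ of a $\pi$-special character to an \emph{arbitrary} subgroup $U$ are $\pi$-special. Gajendragadkar's restriction theorem gives this only for normal (indeed subnormal) subgroups; the other known restriction results (irreducible restriction to a Hall $\pi$-subgroup, or Theorem A of \cite{induct} for irreducible restrictions to subgroups of $\pi'$-index) also do not apply to the $U$ of this lemma, which is just some subgroup from which $\psi$ induces irreducibly. A concrete counterexample to the general claim you invoke: take $\pi = \{2\}$, $G = {\rm SL}_2(3)$, $\alpha$ the unique $2$-special character of degree $2$, and $U$ a Sylow $3$-subgroup; then $\alpha_U$ is the sum of the two nontrivial linear characters of $U$, and neither is $2$-special. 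Consequently your assertion that every constituent of $\chi_U = \alpha_U\beta_U$ has the form $\gamma\beta_U$ with $\gamma$ $\pi$-special in $U$ is unjustified, and with it the identification $\psi = \gamma_j\beta_U$. In fact, knowing that $\psi\,(\beta_U)^{-1}$ is $\pi$-special is essentially the content of the lemma, so at this point the argument assumes what has to be proved.

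The paper's proof goes through induction rather than restriction, and this is exactly what fills the gap. Since $\beta$ is linear, $(\psi\,\beta^{-1}|_U)^G = \psi^G\beta^{-1} = \chi\beta^{-1} = \alpha$, which is $\pi$-special (note also that $|G:U| = \chi(1)/\psi(1)$ divides $\alpha(1)$, a $\pi$-number). Theorem C of \cite{induct}, the induction counterpart of the restriction theorems, then shows that $\psi\,\beta^{-1}|_U$ is $\pi$-special; writing $\psi = \gamma\delta$ with $\gamma$ the $\pi$-special factor, uniqueness of factorization forces $\delta\,\beta^{-1}|_U = 1_U$, i.e.\ $\delta = \beta_U$. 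If you want to keep your restriction-flavored outline, you must first prove that the particular constituent $\psi\,(\beta_U)^{-1}$ of $\alpha_U$ is $\pi$-special, and the only available tool for that here is precisely this induction theorem.
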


\begin{proof}
Note that  $\alpha = \alpha \beta \beta^{-1} = \chi \beta^{-1}$.  It
follows that $(\psi \beta^{-1}|_U)^G = \psi^G \beta^{-1} = \chi \beta^{-1} = \alpha$.  Since $\alpha$ is $\pi$-special, we may use Theorem C of \cite{induct} to see that $\psi \beta^{-1}|_U$ is $\pi$-special.  We can write $\psi = \gamma \delta$ where $\gamma$ is $\pi$-special.  Now, $\gamma^o = \psi^o = (\psi
\beta^{-1}|_U)^o$, and so, $\gamma = \psi \beta^{-1}|_U = \gamma \delta \beta^{-1}|_U$. It follows that $\delta \beta^{-1}|_U = 1_U$, and hence, $\delta = \beta_U$.
\end{proof}

The next result should be compared with Lemma 3.2 of \cite{vertodd}.  Let $\pi$ be a set of primes with $2 \in \pi$ and $G$ is $\pi$-separable.  We will need the basic properties of the set $\bpig \subseteq \irrg$ introduced in \cite{pisep}.  In particular, we need to know that restriction to $G^o$ gives a bijection from $\bpig$ to $\ipig$ and that the $\pi$-special characters of $G$ are precisely the characters of $\pi$-degree in $\bpig$.  We will also use the magic field automorphism that was described in \cite{odd}.  We write $\sigma$ to denote the magic field automorphism.  Let $\chi \in \irrg$. In \cite{odd}, it is proved that $\chi \in \Bpi G$ if and only if $\chi^\sigma = \chi$ and $\chi^o \in \ipig$.

\begin{lemma} \label{lemma 3.2}
Let $\pi$ be a set of primes with $2 \in \pi$, and let $G$ be a $\pi$-separable group with subgroup $U$.  Suppose $\chi \in \irrg$ satisfies $\chi^o \in \ipig$.  Assume $\psi \in \irr(U)$ is $\pi$-factored so that $\chi = \psi^G$.  Suppose $|G:U|$ is a $\pi$-number and the $\pi'$-special factor of $\psi$ extends to $G$. Then $\chi$ is $\pi$-factored.
\end{lemma}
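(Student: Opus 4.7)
The plan is to exhibit the required $\pi$-factorization of $\chi$ by taking the $\pi'$-special factor to be a carefully chosen extension $\hat\beta \in \irr(G)$ of the $\pi'$-special factor of $\psi$, and then showing that the complementary quotient $\chi\hat\beta^{-1}$ is $\pi$-special. By Lemma \ref{lemma 2.3}, the $\pi'$-special factor $\beta$ of $\psi = \alpha\beta$ is linear, so any extension of $\beta$ to $G$ is automatically linear.

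First, I would arrange that the extension is $\pi'$-special. Starting from an arbitrary linear extension $\hat\beta \in \irr(G)$ provided by the hypothesis, I would split the order of $\hat\beta$ into its $\pi$- and $\pi'$-parts to factor $\hat\beta = \mu\nu$ with $\mu$ linear $\pi$-special and $\nu$ linear $\pi'$-special. Restricting to $U$ yields $\mu|_U \cdot \nu|_U = \beta$, which is a $\pi$-factorization of $\beta$; by uniqueness of $\pi$-factorization, $\mu|_U = 1_U$ and $\nu|_U = \beta$. So replacing $\hat\beta$ by $\nu$, I may assume $\hat\beta$ is $\pi'$-special.

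Next, I would identify $\chi\hat\beta^{-1}$ as $\alpha^G$. Using the standard identity $\psi^G \cdot \lambda = (\psi \cdot \lambda|_U)^G$ for linear $\lambda \in \irr(G)$, one gets $\chi\hat\beta^{-1} = (\psi\beta^{-1})^G = \alpha^G$, which is irreducible and has $\pi$-degree because $|G:U|$ and $\alpha(1)$ are both $\pi$-numbers. To conclude that $\alpha^G$ is $\pi$-special, I would invoke the characterization that $\eta \in \Bpi{G}$ iff $\eta^\sigma = \eta$ and $\eta^o \in \ipig$, where $\sigma$ is the magic field automorphism. Since $\alpha$ is $\pi$-special, $\alpha \in \Bpi{U}$, so $\alpha^\sigma = \alpha$; and because $\sigma$ commutes with induction, $(\alpha^G)^\sigma = \alpha^G$. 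For the partial-character condition, a linear $\pi'$-special character is identically $1$ on $\pi$-elements (its value there is a root of unity whose order divides both a $\pi$- and a $\pi'$-number, hence is $1$), so $\hat\beta^o$ is trivial and $(\alpha^G)^o = \chi^o \in \ipig$. Therefore $\alpha^G \in \Bpi{G}$ has $\pi$-degree, so it is $\pi$-special, and $\chi = \alpha^G \cdot \hat\beta$ is the desired $\pi$-factorization.

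The main obstacle is the first step: producing a $\pi'$-special extension from an arbitrary extension of $\beta$. The linearity guaranteed by Lemma \ref{lemma 2.3} is what makes this possible, through the uniqueness of $\pi$-factorization of linear characters. Once the extension is $\pi'$-special, the $\Bpi{G}$-characterization via the magic field automorphism delivers the $\pi$-special factor almost for free.
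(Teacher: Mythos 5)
Your proposal is correct, and its endgame coincides with the paper's: both arguments produce a $\pi'$-special extension $\hat\beta$ (resp.\ $\delta$) of $\beta$ with restriction $\beta$ to $U$, write $\chi = \alpha^G\hat\beta$ via the induction identity, and then certify that $\alpha^G$ is $\pi$-special by checking it is fixed by the magic field automorphism and that $(\alpha^G)^o = \chi^o \in \ipig$, so that it lies in $\Bpi G$ and has $\pi$-degree. Where you genuinely diverge is the step that upgrades the hypothesized extension of $\beta$ to a $\pi'$-special one. You first invoke Lemma \ref{lemma 2.3} to see $\beta$ is linear, then split the (necessarily linear) extension $\hat\beta$ into its $\pi$-part and $\pi'$-part and use uniqueness of $\pi$-factorization on restriction to $U$ to identify the $\pi'$-part as the required $\pi'$-special extension; this rests on the standard fact that a linear character is $\pi$-special (resp.\ $\pi'$-special) exactly when its order is a $\pi$-number (resp.\ $\pi'$-number), which also gives your observation that $\hat\beta$ is trivial on $\pi$-elements. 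The paper instead makes no use of linearity at this point: it restricts the given extension to the $\pi'$-elements to get an element of $\I{\pi'}G$ (since its restriction to $U$ is $\beta$ on $\pi'$-elements), lifts it to a character in $\B{\pi'}G$, and applies Theorem A of \cite{induct} to conclude that this lift is $\pi'$-special and restricts to $\beta$ on $U$. Your route is more elementary, avoiding the $\pi'$-partial character machinery and the restriction theorem for $\pi'$-special characters, at the cost of making the lemma depend on Lemma \ref{lemma 2.3} and hence on Navarro's Lemma \ref{navarro}; the paper's upgrade step works for a $\pi'$-special factor of arbitrary degree and is independent of that input, although under the stated hypotheses (which include $\chi^o \in \ipig$) both approaches are legitimately available, so there is no gap in yours.
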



We will use the notation $\beta'$ to denote the restriction of an ordinary  character $\beta$ of $G$ to the $\pi'$-elements of $G$.

\begin{proof}
Let $\psi = \alpha \beta$ where $\alpha$ is $\pi$-special and
$\beta$ is $\pi'$-special.  Let $\phi = \beta' \in \I {\pi'}U$. Let $\eta \in \irrg$ be an extension of $\beta$.  Now, $(\eta')_U = (\eta_U)' = \beta' \in \I {\pi'}U$.  It follows that $\eta' \in \I {\pi'}G$.  Let $\delta \in \B {\pi'}G$ so that $\delta' = \eta'$. Observe that $\delta (1) = \eta (1) = \beta (1)$ is a $\pi'$-number, and so $\delta$ is $\pi'$-special.  Also, $(\delta_U)' = \beta'$ implies that $\delta_U \in \irr(U)$.  By Theorem A of \cite{induct}, $\delta_U$ is $\pi'$-special.  This implies that $\delta_U = \beta$.

We now have $\chi = \psi^G = (\alpha \beta)^G = \alpha^G \delta$.  This implies that $\alpha^G \in \irrg$.  Notice that $(\alpha^G)^\sigma = (\alpha^\sigma)^G = \alpha^G$.  Also, $\chi^o = (\alpha^G \delta)^o = (\alpha^G)^o \delta^o \in \ipig$, and so, $(\alpha^G)^o \in \ipig$.  It follows that $\alpha^G$ is $\pi$-special.  We conclude that $\chi$ is $\pi$-factored.
\end{proof}

The next result is similar to Corollary 3.3 of \cite{vertodd}.

\begin{lemma} \label{cor 3.3}
Let $\pi$ be a set of primes with $2 \in \pi$, and let $G$ be a $\pi$-separable group.  Let $\chi \in \irrg$ be $\pi$-factored and have $\pi$-degree.  Let $N$ be a normal subgroup of $G$ and suppose $\theta \in \irr(N)$ is a constituent of $\chi_N$.  Let $T$ be the stabilizer of $\theta$ in $G$.  If $\psi \in \irr(T \mid \theta)$ is the Clifford correspondent for $\chi$ with respect to $\theta$, then $\psi$ is $\pi$-factored.
\end{lemma}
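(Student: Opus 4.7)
The plan is to factor $\chi$, restrict the factorization to $N$, identify the Clifford correspondent of each factor, and then multiply them back together.

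Write $\chi = \alpha\beta$ where $\alpha$ is $\pi$-special and $\beta$ is $\pi'$-special. Since $\chi(1) = \alpha(1)\beta(1)$ is a $\pi$-number and $\beta(1)$ is a $\pi'$-number, we conclude $\beta(1)=1$, so $\beta$ is linear. Because $N \nrml G$, every irreducible constituent of $\alpha_N$ is $\pi$-special and every irreducible constituent of $\beta_N$ is $\pi'$-special; in particular, since $\beta$ is linear, $\beta_N$ is itself a linear $\pi'$-special character. Therefore
$$\chi_N = \alpha_N \cdot \beta_N = \sum_i (\alpha_i \beta_N),$$
where the $\alpha_i$ are the $G$-conjugates of a $\pi$-special constituent $\alpha_0$ of $\alpha_N$, counted with multiplicity. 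Each summand $\alpha_i\beta_N$ is $\pi$-factored and irreducible, and by the uniqueness of $\pi$-factorization we may write $\theta = \alpha_0 \beta_N$ for some constituent $\alpha_0$ of $\alpha_N$.

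Next I would identify the inertia groups. Since $\beta_N$ is $G$-invariant (as the restriction to $N$ of the $G$-character $\beta$), uniqueness of factorization also gives $T = I_G(\theta) = I_G(\alpha_0)$. Let $\psi_0 \in \irr(T \mid \alpha_0)$ be the Clifford correspondent of $\alpha$ with respect to $\alpha_0$. The key ingredient, which is exactly the statement used in \cite{vertodd} at the analogous point, is that the Clifford correspondent of a $\pi$-special character over a $\pi$-special constituent is again $\pi$-special; this is a standard consequence of Theorem A of \cite{induct}. Hence $\psi_0$ is $\pi$-special.

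Now set $\psi' = \psi_0 \cdot \beta_T$. Since $\psi_0$ is $\pi$-special and $\beta_T$ is (linear and) $\pi'$-special, the product $\psi'$ is irreducible and $\pi$-factored. Using that $\beta$ is a character of $G$, we compute
$$(\psi')^G = (\psi_0 \beta_T)^G = \psi_0^G \cdot \beta = \alpha \beta = \chi,$$
and $\psi'_N = (\psi_0)_N \beta_N$ has $\alpha_0 \beta_N = \theta$ as a constituent, so $\psi' \in \irr(T \mid \theta)$. The uniqueness clause of the Clifford correspondence then forces $\psi = \psi'$, which is $\pi$-factored, completing the proof.

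The only nontrivial ingredient here is the fact that Clifford correspondents of $\pi$-special characters are $\pi$-special, and this is already available from \cite{induct}; everything else is just bookkeeping with the $\pi$-factorization. So I do not anticipate a serious obstacle — the argument is essentially a line-for-line $\pi$-separable adaptation of Corollary 3.3 of \cite{vertodd}, with Lemma \ref{navarro} ensuring (through Lemma \ref{lemma 2.3}-style reasoning, though not needed directly here) that the relevant linearity hypotheses are well-placed.
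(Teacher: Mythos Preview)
Your proof is correct and follows the same overall strategy as the paper: factor $\chi$, observe the $\pi'$-factor is linear, identify $T$ with the stabilizer of the $\pi$-special part of $\theta$, take the Clifford correspondent of the $\pi$-special factor of $\chi$, and multiply back by the restricted linear $\pi'$-factor.

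Two remarks. First, the result you invoke---that a character inducing irreducibly to a $\pi$-special character is itself $\pi$-special---is Theorem~C of \cite{induct}, not Theorem~A (Theorem~A concerns restrictions; compare its use in Lemma~\ref{lemma 3.2}). Second, the paper handles that same step differently: rather than citing \cite{induct}, it uses the magic field automorphism $\sigma$ from \cite{odd}. Writing $\mu$ for the Clifford correspondent of the $\pi$-special factor, the paper checks that $\mu(1)$ is a $\pi$-number, that $\mu^o \in \Ipi T$, and that $\mu^\sigma = \mu$ (since $\alpha_0^\sigma = \alpha_0$ and the Clifford correspondent is unique), whence $\mu \in \Bpi T$ has $\pi$-degree and is therefore $\pi$-special. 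Your route via Theorem~C is shorter and avoids bringing $\sigma$ into this lemma at all.
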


\begin{proof}
Observe that $\theta$ is $\pi$-factored.  We can write $\chi =
\gamma \delta$ and $\theta = \alpha \beta$ where $\gamma$ and
$\alpha$ are $\pi$-special and $\delta$ and $\beta$ are
$\pi'$-special.  Since $\chi$ has $\pi$-degree, $\delta (1) = 1$ and thus, $\delta_N = \beta$.  It follows that $T$ is the stabilizer of $\alpha$ in $G$.  We take $\mu \in \irr(T \mid \alpha)$ to be the Clifford correspondent for $\gamma$ with respect to $\alpha$. We have $\gamma (1) = |G:T| \mu (1)$, and thus, $\mu (1)$ is a $\pi$-number. Observe that $(\mu^o)^G = (\mu^G)^o = \gamma^o \in \ipig$ and thus, $\mu^o \in \ipi(T)$.  Since $\alpha^\sigma = \alpha$, we have $\mu^\sigma \in \irr(T \mid \alpha)$.  Since $(\mu^\sigma)^G = (\mu^G)^\sigma = \mu^G$, it follows that $\mu^\sigma = \mu$, and we conclude that $\mu$ is $\pi$-special.  Because $\delta$ is linear and $\pi'$-special, $\delta_T$ is $\pi'$-special.  We see that $(\mu\delta_T)^G = \mu^G \delta = \gamma \delta= \chi$.  Also, $(\mu\delta_T)_N = \mu_N \delta_N$, and so, $\alpha \beta = \theta$ is a constituent of $(\mu\delta_T)_N$.  We obtain $\mu\delta_T \in \irr(T \mid \theta)$.  Since $(\mu\delta_T)^G = \chi = \psi^G$, we can use the Clifford correspondence to see that $\psi = \mu \delta_T$.  Therefore, $\psi$ is $\pi$-factored.
\end{proof}

Since the proof of the next lemma is essentially the proof of Lemma 3.4 of \cite{vertodd} where Lemma \ref{lemma 2.3} is used in place of Lemma 2.3 of \cite{vertodd}, we do not include it here.

\begin{lemma} \label{lemma 3.4}
Let $\pi$ be a set of primes with $2 \in \pi$, and let $G$ be a $\pi$-separable group.  Let $\chi \in \irrg$ be a lift of $\phi \in \ipig$, and suppose $N$ is normal in $G$ such that the constituents of $\chi_N$ are $\pi$-factored.  Suppose $\psi \in \irr(U)$ is $\pi$-factored, and suppose $\psi^G = \chi$.  Then $|NU:U|$ is a $\pi$-number.
\end{lemma}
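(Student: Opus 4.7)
The plan is to follow the proof of Lemma 3.4 in \cite{vertodd} line by line, replacing each appeal to Lemma 2.3 of \cite{vertodd} with an appeal to our Lemma \ref{lemma 2.3}. The overall strategy is to reduce to the case $G=NU$, and then to turn the character-theoretic information into group-theoretic information about $|N:N\cap U|$.

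First I would set $W = NU$, note that $|W:U| = |N:N\cap U|$, and use induction in stages. Since $\psi^G = \chi$ is irreducible, $\psi^W \in \irr(W)$ is irreducible as well, and $(\psi^W)^G = \chi$. The constituents of $(\psi^W)_N$ are constituents of $\chi_N$ and so, by hypothesis, $\pi$-factored. The partial character $(\psi^W)^o$ satisfies $((\psi^W)^o)^G = \chi^o = \phi \in \ipig$, so $(\psi^W)^o \in \ipi(W)$, i.e.\ $\psi^W$ is a lift of a $\pi$-partial character. Thus, after replacing $(G,\chi)$ with $(W,\psi^W)$, I may assume $G = NU$, and the goal becomes: $|G:U|$ is a $\pi$-number.

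Now I would exploit the factorizations. Write $\psi = \alpha\beta$ with $\alpha$ $\pi$-special and $\beta$ $\pi'$-special on $U$. By Lemma \ref{lemma 2.3}, $\beta$ is linear (this is exactly the spot where the odd-order input of \cite{vertodd} is replaced by our $\pi$-separable statement). Let $\theta = \alpha_0\beta_0$ be a constituent of $\chi_N$ in its $\pi$-factorization. Since $G = NU$ and $N\triangleleft G$, Mackey yields $\chi_N = (\psi^G)_N$ decomposes via the restriction $\psi_{N\cap U}$: every irreducible constituent of $\psi_{N\cap U}$ has the form $\alpha'\cdot\beta_{N\cap U}$ with $\alpha'$ $\pi$-special (restriction preserves $\pi$-specialness on irreducible constituents) and $\beta_{N\cap U}$ linear and $\pi'$-special. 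By uniqueness of $\pi$-factorizations and Frobenius reciprocity, this forces the $\pi'$-special factor $\beta_0$ to satisfy $\beta_{N\cap U} \in \irr(N\cap U \mid (\beta_0)_{N\cap U})$.

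The main obstacle is the final step: converting this factorization data into the assertion that $|N:N\cap U|$ is a $\pi$-number. I expect to handle it by the contrapositive, using Lemma \ref{navarro} as in \cite{vertodd}. If the $\pi'$-part of $|G:U|$ were nontrivial, then one could extend/induce $\beta$ appropriately (using the linearity of $\beta$ and its extendibility off a Hall $\pi'$-subgroup of $U$) to produce a $\pi'$-special character of $G$ of degree greater than one whose restriction to $G^o$ lies in $\ipig$, contradicting Lemma \ref{navarro}. This last reduction is precisely where the hypothesis $2\in\pi$ is used: it is what makes Lemma \ref{navarro} available, playing the role that $|G|$ odd played in \cite{vertodd}.
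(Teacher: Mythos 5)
Your meta-plan (transport the proof of Lemma 3.4 of \cite{vertodd}, substituting Lemma \ref{lemma 2.3} for its Lemma 2.3) is exactly what the paper does --- it omits the proof for precisely this reason --- and your reduction to $G=NU$ is fine: $\psi^{NU}$ is irreducible, $\pi$-factored, induces $\chi$, its restriction to $N$ has $\pi$-factored constituents, and $(\psi^{NU})^o\in\ipi(NU)$. The genuine gap is the final step, which is where all the content of the lemma lies: you never convert the factorization data into the assertion that $|N:N\cap U|$ is a $\pi$-number. The proposed contrapositive --- ``extend/induce $\beta$ appropriately to produce a $\pi'$-special character of $G$ of degree greater than one whose restriction to $G^o$ lies in $\ipig$, contradicting Lemma \ref{navarro}'' --- is a hope, not an argument: $\beta$ is a linear character of $U$, there is no reason it should extend to $NU$ or $G$, induction destroys $\pi'$-speciality, and nothing guarantees that any character so manufactured restricts irreducibly to the $\pi$-elements. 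In this circle of ideas Lemma \ref{navarro} is used only inside Lemma \ref{lemma 2.3}; it is not the engine of Lemma \ref{lemma 3.4}.

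What actually finishes the argument is the conjugacy of vertex subgroups. By Lemma \ref{lemma 2.3}, $\beta$ is linear, so $\psi^o\in\ipi(U)$ has $\pi$-degree and a Hall $\pi$-complement $Q$ of $U$ is a vertex subgroup for $\varphi=\chi^o$ (Theorem B of \cite{IsNa}). On the other hand, since the constituents of $\chi_N$ are $\pi$-factored, $N$ is contained in the subgroup $W$ of a normal nucleus $(W,\xi)$ of $\chi$ in the sense of \cite{navvert}; $\xi$ is $\pi$-factored and induces $\chi$, so by Lemma \ref{lemma 2.3} again a Hall $\pi$-complement $Q^*$ of $W$ is also a vertex subgroup for $\varphi$, and because $N \nrml G$ lies in $W$, the intersection $Q^*\cap N$ is a Hall $\pi$-complement of $N$. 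Writing $Q=(Q^*)^g$ by conjugacy of vertices, we get that $Q\cap N=(Q^*\cap N)^g$ is a Hall $\pi$-complement of $N$; since $Q\le U$, the index $|NU:U|=|N:N\cap U|$ divides $|N:N\cap Q|$, which is a $\pi$-number. (One can also organize this as an induction in the style of Lemma \ref{lemma 3.5}, but either way the step you left open needs vertex conjugacy together with a second $\pi$-factored inducing pair over a subgroup containing $N$, not Lemma \ref{navarro}.) Your Mackey analysis of $\psi_{N\cap U}$ is harmless but does not feed into this; also, the displayed condition $\beta_{N\cap U}\in\irr(N\cap U \mid (\beta_0)_{N\cap U})$ is not well formed as stated, since $(\beta_0)_{N\cap U}$ is a character of $N\cap U$ itself rather than of a normal subgroup.
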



This next lemma is similar to Lemma 3.5 of \cite{vertodd}.

\begin{lemma} \label{lemma 3.5}
Let $\pi$ be a set of primes with $2 \in \pi$, and let $G$ be a $\pi$-separable group.  Let $\chi \in \irrg$ be a lift of $\phi \in \ipig$.  Suppose $\psi$ is a $\pi$-factored character of some subgroup $H$ of $G$ that induces $\chi$, and suppose there is a normal subgroup $N$ of $G$ such that the constituents of $\chi_N$ are $\pi$-factored and $G = NH$.  Then $\chi$ is $\pi$-factored and the $\pi'$-special factor of $\chi$ restricts irreducibly to the $\pi'$-special factor of $\psi$.
\end{lemma}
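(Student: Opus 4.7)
The plan is to deduce the conclusion from Lemma~\ref{lemma 3.2} by verifying its two hypotheses for the pair $(H,\psi)$. The first, that $|G:H|$ is a $\pi$-number, follows immediately from Lemma~\ref{lemma 3.4} applied with $U=H$, which yields that $|NH:H|$ is a $\pi$-number, combined with $G=NH$. The second hypothesis, that the $\pi'$-special factor of $\psi$ extends to $G$, is the heart of the proof. Writing $\psi=\alpha\beta$ with $\alpha$ $\pi$-special and $\beta$ $\pi'$-special, Lemma~\ref{lemma 2.3} forces $\beta(1)=1$; moreover, the argument inside the proof of Lemma~\ref{lemma 3.2} shows that any ordinary linear extension of $\beta$ to $G$ automatically upgrades to a $\pi'$-special one. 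So it suffices to construct a linear character of $G$ restricting to $\beta$.

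To build this extension, set $L=H\cap N$. Since $G=NH$ gives a single $(H,N)$-double coset, Mackey's formula yields $\chi_N=(\psi_L)^N$. Take a $\pi$-factored constituent $\theta=\mu\nu$ of $\chi_N$. Frobenius reciprocity produces a common irreducible constituent of $\theta_L=\mu_L\nu_L$ and $\psi_L=\alpha_L\beta_L$; uniqueness of the $\pi$-factorization identifies this shared constituent as $\alpha_i\beta_L$ for some $\pi$-special constituent $\alpha_i$ of $\alpha_L$, and in particular $\beta_L$ occurs in $\nu_L$. To argue that $\nu$ is linear, use that $\chi^o\in\ipig$: the $\pi$-partial Clifford theorem identifies the irreducible constituents of $\chi_N^o$ as a single $G$-orbit in $\ipi(N)$, and tracking through the decomposition $\theta^o=\mu^o\nu^o$ (with $\mu^o$ already irreducible since $\mu$ is $\pi$-special) together with Lemma~\ref{navarro} applied inside $N$ forces $\nu(1)=1$. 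Hence $\nu_L=\beta_L$.

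Define $\tilde\beta(nh):=\nu(n)\beta(h)$ for $n\in N$ and $h\in H$. The equality $\nu_L=\beta_L$ makes $\tilde\beta$ well-defined on $G=NH$, and the formula is a homomorphism precisely when $\nu$ is $H$-invariant. To verify the $H$-invariance, apply the same analysis to each conjugate $\theta^h=\mu^h\nu^h$, which is again a $\pi$-factored constituent of $\chi_N$: the restriction $(\nu^h)_L$ equals $\beta_L$ because $\beta$ is a class function on $H$, so $\nu^h$ is another linear $\pi'$-special extension of $\beta_L$ to $N$. The lift hypothesis on $\chi$, which pins $\mu^h$ down within its $G$-orbit via the $\pi$-partial character $\theta^o=\mu^o$ and compatibly constrains $\nu^h$ inside $\chi_N$, is what forces $\nu^h=\nu$. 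The resulting $\tilde\beta$ is a linear character of $G$ of $\pi'$-order, hence $\pi'$-special, and Lemma~\ref{lemma 3.2} now applies to give that $\chi$ is $\pi$-factored.

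Write $\chi=\gamma\rho$ with $\gamma$ $\pi$-special and $\rho$ $\pi'$-special. Lemma~\ref{lemma 3.1} yields $\rho_H=\beta$, which is irreducible since $\beta$ is linear. The main obstacle is the $H$-invariance step for $\nu$: without the lift hypothesis on $\chi$, the $\pi'$-special factors of the $H$-conjugates of $\theta$ could genuinely disagree, and the gluing of $\nu$ on $N$ with $\beta$ on $H$ into a character on $G$ would fail.
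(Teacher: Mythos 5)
There is a genuine gap at the step you yourself flag as the heart of the matter: the $H$-invariance of $\nu$. Your justification --- that the lift hypothesis ``pins $\mu^h$ down'' and ``compatibly constrains $\nu^h$ inside $\chi_N$'' --- is an assertion, not an argument, and it cannot work as stated. Since $\nu$ is linear and $\pi'$-special, its values on $\pi$-elements are trivial, so $\theta^o = \mu^o\nu^o = \mu^o$: the partial character $\chi^o$ carries no information whatsoever about $\nu$, and in particular cannot distinguish $\nu$ from $\nu^h$. Nothing prevents $h$ from stabilizing $\mu$ (hence $\theta^o$) while moving $\nu$; a priori the distinct constituents of $\chi_N$ may have distinct $\pi'$-special factors, and the statement that they share a common ($G$-invariant) $\pi'$-special factor is essentially equivalent, via Lemma \ref{chief}, to the conclusion that $\chi$ is $\pi$-factored --- which is what you are trying to prove. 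So the gluing construction $\tilde\beta(nh)=\nu(n)\beta(h)$, which is fine once $\nu_L=\beta_L$ and $\nu$ is $H$-invariant, is built on exactly the unproved point. (Your other steps --- $|G:H|$ a $\pi$-number via Lemma \ref{lemma 3.4}, linearity of $\beta$ via Lemma \ref{lemma 2.3}, $\nu_L=\beta_L$ via Mackey and uniqueness of factorization --- are sound or easily patched, e.g.\ linearity of $\nu$ follows because $\chi$ has $\pi$-degree and $\theta(1)$ divides $\chi(1)$.)

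For comparison, the paper never attempts to prove invariance of the $\pi'$-special factor over $N$ directly. It runs a minimal counterexample argument on $|G:H|+|N|$, replaces $N$ by a chief factor $N/K$ of $G$ and reduces to the case $K\le H$; then the relevant character is the $\pi'$-special factor $\nu$ of a constituent of $\psi_K$, i.e.\ $\beta_K$, which extends both to $N$ (as the restriction of the linear $\pi'$-special factor of a constituent $\theta$ of $\chi_N$ lying over it) and to $H$ (as $\beta$), hence is invariant under both and so under $G=NH$. Taking the unique $\pi'$-special extension $\hat\nu$ to $N$ and applying Corollary 4.2 of \cite{pisep} (using that $|N:K|$ is a $\pi$-number) yields a restriction bijection $\irr(G\mid\hat\nu)\to\irr(H\mid\hat\nu_{N\cap H})$, which is what produces the extension of $\beta$ to $G$ needed for Lemma \ref{lemma 3.2}. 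To repair your proof you would need an argument of this kind (working over a normal subgroup contained in $H$, where invariance comes from having two extensions), rather than an appeal to the lift hypothesis over $N$ itself.
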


\begin{proof}
Notice that the second conclusion follows from the first conclusion by Lemma \ref{lemma 3.1}.  We assume the first conclusion is not true, and we take $G$, $N$, and $H$ to be a counterexample with $|G:H| + |N|$ minimal.

By Lemma \ref{lemma 2.3}, the $\pi'$-special factor of $\psi$ is linear, so $\psi (1)$ is a $\pi$-number.  Applying Lemma \ref{lemma 3.4}, we see that $|G:H| = |HN:N|$ is a $\pi$-number.  Since $\chi (1) = |G:H| \psi (1)$, we see that $\chi$ has $\pi$-degree.

Choose $K$ normal in $G$ so that $N/K$ is a chief factor for $G$.  Notice that the irreducible constituents of $\chi_K$ are $\pi$-factored.  If $G = HK$, then $G$, $K$, and $H$ form a counterexample with $|G:H| + |K| < |G:H| + |N|$ violating the choice of minimal counterexample.  Thus, we have $HK < G$.

Notice that $G = NH = N(HK)$.  Notice that $\psi^{HK} \in \irr(HK)$ will be a lift of a partial character in $\ipi(HK)$.  Also, the irreducible constituents of $(\psi^{HK})_K$ are constituents of $\chi_K$, and thus must be factored.  If $H < HK$, then $|HK:H| + |K| < |G:H| + |N|$, and so $HK$, $K$, and $H$ cannot form a counterexample.  Thus, $\psi^{HK}$ must be factored and induce $\chi$.  Also, $|G:HK| + |N| < |G:H| + |N|$, so $G$, $HK$, and $N$ do not form a counterexample.  We conclude that $\chi$ is $\pi$-factored, a contradiction.  This implies that $H = HK$.

We have $K \le H$.  Let $\eta$ be an irreducible constituent of $\psi_K$.  Notice that $\eta^N$ has an irreducible constituent $\theta$ which is a constituent of $\chi_N$, so $\theta$ and $\eta$ are both $\pi$-factored.  Since $\chi$ has $\pi$-degree, $\theta$ has a linear $\pi'$-special factor.  If $\nu$ is the $\pi'$-special factor of $\eta$, then $\nu$ extends to both the $\pi'$-special factor of $\theta$ and the $\pi'$-special factor of $\psi$.  This implies that $\nu$ is invariant in both $N$ and $H$.  Since $G = NH$, we conclude that $\nu$ is $G$-invariant.

Note that $|N : K|$ divides the $\pi$-number $|G: H|$ and thus $|N : K|$ is a $\pi$-number.  Let $\hat\nu$ be the unique $\pi'$-special extension of $\nu$ to $N$, and since $\nu$ is $G$-invariant so is $\hat\nu$.  We can now apply Corollary 4.2 of \cite{pisep} to see that restriction defines a bijection from $\irr(G \mid \hat\nu)$ to $\irr(H \mid \hat\nu_{N \cap H})$.  Observe that the $\pi'$-special factor of $\psi$ will belong to $\irr(H \mid \hat\nu_{N \cap H})$ since $\hat\nu_{N \cap H}$ is the unique $\pi'$-special extension of $\nu$ to $N \cap H$.  It follows that the $\pi'$-special factor of $\psi$ extends to $G$, and applying Lemma \ref{lemma 3.2} we conclude that $\chi$ is factored, as desired.
\end{proof}

We make use of the normal nucleus constructed by Navarro in \cite{navvert}.  We quickly summarize this construction.  Fix a character $\chi \in \irrg$.  Navarro shows that there is a unique subgroup $N$ that is maximal subject to being normal in $G$ and the irreducible constituents of $\chi_N$ are $\pi$-factored.  If $N = G$, then take $(G,\chi)$ to be the normal nucleus of $\chi$.  If $N < G$, let $\theta$ be an irreducible constituent of $\chi_N$.  Navarro shows that $\theta$ is not $G$-invariant.  Then we let $\chi_{\theta} \in \irr(G_{\theta} \mid \theta)$ be the Clifford correspondent for $\chi$ with respect to $\theta$ (see Theorem 6.11 of \cite{text} or Theorem 19.6 (d) of \cite{hupte}).  We define the normal nucleus for $\chi$ to be the normal nucleus of $\chi_\theta$ which can be computed inductively since $G_{\theta} < G$.  It can be easily seen that all of the normal nuclei for $\chi$ are conjugate.

The proof of Theorem \ref{second main} is essentially the proof Theorem 4.1 of \cite{vertodd}, and thus we do not include it here in full detail.  However, we do provide a brief sketch of the proof.  The goal is to show that if $(U, \psi)$ is any generalized $\pi$-nucleus of $\chi$, then the generalized $\pi$-vertex of $\chi$ defined by $(U, \psi)$ is conjugate to a vertex for $\chi$ arising from a normal nucleus.  Lemmas \ref{lemma 2.3} and \ref{lemma 3.1} allow us to assume that $\chi$ is not factorable.  Let $N \nrml G$ be maximal so that the constituents of $\chi_N$ are factorable.  By Lemma \ref{lemma 3.4}, we see that $|NU : U|$ is a $\pi$-number, and thus Lemma \ref{lemma 3.5} allows us to replace the pair $(U, \psi)$ with $(NU, \psi^{NU})$, and thus we may assume $N \subseteq U$.  Letting $\theta$ be a constituent of $\psi_N$, we use Lemma \ref{cor 3.3} and Lemma \ref{lemma 3.1} to replace $(U, \psi)$ with the pair $(U_{\theta}, \xi)$, where $\xi$ is the Clifford correspondent for $\psi$ in $\irr(U_{\theta} |\theta)$.  We finish by applying the inductive hypothesis to the group $G_{\theta}$ and the Clifford correspondent for $\chi$ lying over $\theta$, which by definition has a normal nucleus in common with $\chi$.


With Theorem \ref{second main}, we can prove analogs of the results in \cite{rmjm} when $2 \in \pi$.  For example, this next result is the analog of Theorem 1.1 of \cite{rmjm}.  The proof of this next result is essentially identical to the proof of Theorem 1.1 of \cite{rmjm}.

\begin{lemma}
Suppose $G$ is $\pi$-separable and $2 \in \pi$.  Let $\chi \in \irrg$ satisfy $\chi^o \in \ipig$.  If $N$ is a normal subgroup of $G$ and $\psi$ is an irreducible constituent of $\chi_N$, then there is a generalized vertex $(Q,\delta)$ for $\chi$ so that $(Q \cap N,\delta_{Q \cap N})$ is a generalized vertex for $\psi$.
\end{lemma}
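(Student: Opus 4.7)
The plan is to adapt the proof of Theorem 1.1 of \cite{rmjm} by induction on $|G|$. The base case $N = G$ is immediate because $\psi = \chi$ and then any generalized vertex $(Q,\delta)$ of $\chi$ satisfies $(Q \cap N, \delta_{Q \cap N}) = (Q,\delta)$.

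First I would reduce to the case where $\psi$ is $G$-invariant. Suppose $T := G_\psi < G$, and let $\chi_\psi \in \irr(T\mid\psi)$ be the Clifford correspondent of $\chi$ above $\psi$. Using the partial-character Clifford correspondence of Fong recalled in Section 2, $\chi_\psi^o \in \ipi(T)$, so the inductive hypothesis applied to $(T, \chi_\psi, N, \psi)$ yields a generalized vertex $(Q,\delta)$ of $\chi_\psi$ with $(Q \cap N, \delta_{Q \cap N})$ a generalized vertex of $\psi$. Any generalized nucleus $(V,\zeta)$ of $\chi_\psi$ that produces $(Q,\delta)$ also induces to $\chi$, since $\zeta^G = (\zeta^T)^G = \chi_\psi^G = \chi$, so the same pair $(Q,\delta)$ is a generalized vertex of $\chi$.

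Next I would handle the case where $\chi$ is $\pi$-factored, writing $\chi = \alpha\beta$. Lemma \ref{lemma 2.3} applied to the nucleus $(G,\chi)$ gives that $\beta$ is linear, so $\chi_N = \alpha_N\beta_N$ has irreducible constituents of the form $\alpha_i\beta_N$ with $\alpha_i \in \irr(N)$ $\pi$-special; in particular $\psi = \alpha_1\beta_N$ is $\pi$-factored. Choose a Hall $\pi$-complement $Q$ of $G$ such that $Q \cap N$ is a Hall $\pi$-complement of $N$ (possible by the normality of $N$ and the $\pi$-separability of $G$). Then $(Q,\beta_Q)$ is a generalized vertex of $\chi$ via $(G,\chi)$, and $(Q \cap N, \beta_{Q \cap N}) = (Q \cap N, (\beta_Q)_{Q \cap N})$ is a generalized vertex of $\psi$ via $(N,\psi)$.

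The remaining case has $\psi$ $G$-invariant and $\chi$ not $\pi$-factored. I would invoke the normal nucleus construction recalled in Section 2: let $L$ be the maximal normal subgroup of $G$ such that the constituents of $\chi_L$ are $\pi$-factored; since $\chi$ is not factored, $L < G$, and the constituents of $\chi_L$ are not $G$-invariant. After replacing $L$ by $L \cap N$ (still $G$-normal, and its $\chi$-constituents remain $\pi$-factored since restricting a $\pi$-factored character to a normal subgroup yields $\pi$-factored constituents), I would select $\theta$ a constituent of $\psi_{L \cap N}$; this $\theta$ is also a constituent of $\chi_{L \cap N}$ and is $\pi$-factored. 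Lemmas \ref{lemma 3.4} and \ref{lemma 3.5} preclude $\theta$ from being $G$-invariant, since that would force $\chi$ itself to be $\pi$-factored. Letting $N_\theta = N \cap G_\theta$, we obtain compatible Clifford correspondents $\chi_\theta \in \irr(G_\theta\mid\theta)$ and $\psi_\theta \in \irr(N_\theta\mid\theta)$ satisfying $\chi_\theta^G = \chi$, $\psi_\theta^N = \psi$, and $\psi_\theta$ a constituent of $(\chi_\theta)_{N_\theta}$. Induction applied in $G_\theta$ then yields a generalized vertex $(Q,\delta)$ of $\chi_\theta$ restricting to one of $\psi_\theta$; since $Q \le G_\theta$ gives $Q \cap N = Q \cap N_\theta$, the pair $(Q,\delta)$ is then a generalized vertex of $\chi$ with $(Q \cap N, \delta_{Q \cap N})$ a generalized vertex of $\psi$. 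The main obstacle is the bookkeeping in this last case: verifying that $L \cap N$ (or an appropriate modification) yields a non-$G$-invariant $\pi$-factored $\theta$ lying below $\psi$ so the Clifford reduction runs, and then transferring the vertex data through the two Clifford correspondences simultaneously.
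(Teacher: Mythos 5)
Your first two cases are sound: the reduction to $G$-invariant $\psi$ via the Clifford correspondent $\chi_\psi \in \irr(G_\psi \mid \psi)$ (noting $\chi_\psi^o \in \ipi(G_\psi)$ because $(\chi_\psi^o)^G = \chi^o$ is irreducible), the transfer of a generalized nucleus from $G_\psi$ back to $G$, and the directly handled $\pi$-factored case are all correct. The gap is in your last case. You pass from the maximal normal subgroup $L$ with $\pi$-factored constituents down to $L \cap N$ and assert that a constituent $\theta$ of $\psi_{L\cap N}$ cannot be $G$-invariant because Lemmas \ref{lemma 3.4} and \ref{lemma 3.5} would then force $\chi$ to be $\pi$-factored. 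Those lemmas say nothing of the sort, and the assertion is false: maximality of $L$ rules out $G$-invariant constituents of $\chi_L$ itself (Navarro's normal nucleus argument), but not of $\chi_{L\cap N}$, since $L\cap N$ may be much smaller than $L$ (it can be trivial or central while $N$ is large, giving $\theta$ trivial or central and certainly $G$-invariant with $\chi$ unfactored). A clean instance of the failure is $N \le L$: there $L\cap N = N$ and $\theta = \psi$, which is $G$-invariant by your own first reduction, yet $\chi$ need not be factored. In all such configurations $G_\theta = G$, your simultaneous Clifford reduction makes no progress, and the induction stalls; this invariant situation is exactly the hard case, and it is where the argument the paper points to (the proof of Theorem 1.1 of \cite{rmjm}) must do its real work — reducing over a constituent of $\chi_L$ lying \emph{over} $\psi$ (whose stabilizer is proper by maximality of $L$, and which contains $N$ when $N \le L$), and using Lemma \ref{lemma 3.4}/\ref{lemma 3.5}-type information to relate the resulting nuclei to $N$, rather than descending to $L\cap N$.

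A secondary point: even when $\theta$ is not $G$-invariant, you use without proof that the two Clifford correspondents are compatible, i.e.\ that $\psi_\theta \in \irr(N_\theta \mid \theta)$ with $\psi_\theta^N = \psi$ can be chosen to be a constituent of $(\chi_\theta)_{N_\theta}$; this requires a Mackey-type verification (possibly after replacing $\theta$ or $\psi_\theta$ by a suitable conjugate, which is harmless since the sets of generalized vertices of $\chi$ and of $\psi$ are closed under the relevant conjugations). The transfer of vertex data itself (a nucleus of $\chi_\theta$ in $G_\theta$ induces to $\chi$, a nucleus of $\psi_\theta$ in $N_\theta$ induces to $\psi$, and $Q \le G_\theta$ gives $Q\cap N = Q\cap N_\theta$) is fine, but as written the proposal does not contain a proof of the crucial case, so it does not yet establish the lemma.
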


\section{Lifts}


Throughout, $G$ is a $\pi$-separable group and $\pi$ is a set of primes with $2 \in \pi$.  We will also use $L_{\varphi}$ and $L_{\varphi} (Q, \delta)$ to denote the set of lifts of $\varphi \in \ipig$ and the set of lifts of $\varphi \in \ipig$ with generalized vertex $(Q, \delta)$, respectively.

In this section, we prove several results about lifts with a specified generalized vertex.  In this first lemma, we characterize the factors when such a character has a restriction to a normal subgroup whose irreducible constituents are $\pi$-factored.

\begin{lemma} \label{factor}
Suppose $G$ is a $\pi$-separable group with $2 \in \pi$.  Let $\chi \in \irrg$ with $\chi^o \in \ipig$ and let $(Q,\delta)$ be a generalized vertex for $\chi^o$.  If $N$ is a normal subgroup of $G$ such that $\chi_N$ has $\pi$-factored irreducible constituents, then $Q \cap N$ is a Hall $\pi$-complement of $N$, and there exists a $\pi$-special character $\alpha$ and a $\pi'$-special character $\beta$ in $\irr(N)$ so that $\alpha\beta$ is a constituent of $\chi_N$, $\alpha^o$ is a constituent of $(\chi^o)_N$, and $\beta_{Q \cap N} = \delta_{Q \cap N}$.
\end{lemma}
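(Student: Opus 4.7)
My plan is to prove the two statements of the conclusion separately, using Mackey's formula applied to a generalized $\pi$-nucleus of $\chi$ to produce the desired constituent of $\chi_N$.

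For part (a), I would choose a generalized nucleus $(U, \psi_0)$ for $\chi$ realizing the vertex $(Q, \delta)$, and write $\psi_0 = \alpha_0 \beta_0$ with $\beta_0$ linear by Lemma \ref{lemma 2.3}. Set $M = N \cap U$. Lemma \ref{lemma 3.4} gives that $|N : M| = |NU : U|$ is a $\pi$-number. Since $M \lhd U$ and $Q$ is a Hall $\pi$-complement of $U$, the intersection $Q \cap M$ is a Hall $\pi$-complement of $M$; combined with $|N:M|$ being a $\pi$-number, $Q \cap M = Q \cap N$ is then a Hall $\pi$-complement of $N$, proving (a).

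For part (b), let $\beta^* = \beta_0|_M$, which is a linear $\pi'$-special character of $M$, and let $\alpha^*$ be an irreducible $\pi$-special constituent of $\alpha_0|_M$ (using that the restriction of $\alpha_0$ to the normal subgroup $M$ of $U$ has $\pi$-special constituents). Mackey's formula applied to $\chi = \psi_0^G$ shows that $(\alpha^*\beta^*)^N$ is a summand of $\chi_N$. The key technical step is to extend $\beta^*$ to a linear $\pi'$-special character $\hat\beta$ of $N$; since $|N:M|$ is a $\pi$-number, and the Hall $\pi'$-subgroup $Q \cap N$ of $N$ already lies in $M$, the $\pi'$-special character $\beta^*$ is determined by its restriction to $Q \cap N$, and the extension $\hat\beta$ is characterized by $\hat\beta|_{Q \cap N} = \beta^*|_{Q \cap N}$. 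With $\hat\beta$ in hand, the induction-product identity $(\alpha^*\beta^*)^N = (\alpha^*)^N \cdot \hat\beta$ combined with Theorem C of \cite{induct} (which ensures that the irreducible constituents of the induced $\pi$-special character $(\alpha^*)^N$ are $\pi$-special) implies that every irreducible constituent of $(\alpha^*\beta^*)^N$ factors as $\alpha \cdot \hat\beta$ with $\alpha \in \irr(N)$ a $\pi$-special character.

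Picking any such irreducible constituent $\theta = \alpha \hat\beta$ and setting $\beta = \hat\beta$, the character $\alpha\beta$ is a $\pi$-factored constituent of $\chi_N$ with linear $\pi'$-factor $\beta$. Because $\beta$ is linear and $\pi'$-special, the orders of $\pi$-elements and of $\beta$ are coprime, so $\beta(g) = 1$ for every $g \in N^o$; hence $\beta^o$ is the trivial partial character on $N^o$, and $\theta^o = \alpha^o \cdot \beta^o = \alpha^o$, showing that $\alpha^o$ is a constituent of $(\chi_N)^o = (\chi^o)_N$. Finally, $\beta_{Q \cap N} = \hat\beta|_{Q \cap N} = \beta^*|_{Q \cap N} = \beta_0|_{Q \cap N} = \delta_{Q \cap N}$, since $Q \cap N \subseteq Q$ and $\delta = \beta_0|_Q$.

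The principal obstacle will be justifying the extension of $\beta^*$ from $M$ to a linear $\pi'$-special character of $N$. This reduces to verifying an invariance condition on $\delta_{Q \cap N}$ under the appropriate $N$-action on the Hall $\pi'$-subgroup, a condition that should follow from the fact that $\beta^*|_{Q \cap N}$ arises as the restriction of the global nucleus datum $\delta$ on $Q$.
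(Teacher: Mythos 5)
Your argument for the first assertion is sound: Lemma \ref{lemma 3.4} gives that $|NU:U|=|N:M|$ is a $\pi$-number for $M=N\cap U$, and since $Q\cap N=Q\cap M$ is a Hall $\pi$-complement of $M\nrml U$, it is one of $N$ as well. The problem is the second assertion, where the step you flag as the ``principal obstacle'' is in fact the entire content of the lemma and is left unproved. You need a linear $\pi'$-special character of $N$ whose restriction to $Q\cap N$ is $\delta_{Q\cap N}$, and nothing in your Mackey computation supplies it: $M=N\cap U$ is normal in $U$ but in general not normal (or subnormal) in $N$, so there is no Clifford-type mechanism extending $\beta^*$ from $M$ to $N$; and an arbitrary linear character of the Hall $\pi'$-subgroup $Q\cap N$ need not be the restriction of any $\pi'$-special character of $N$, so the mere fact that $\beta^*_{Q\cap N}$ comes from the nucleus datum $\delta$ does not give the invariance you would need. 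The paper sidesteps this completely: using Theorem \ref{second main} it takes the nucleus realizing $(Q,\delta)$ to be a Navarro normal nucleus $(V,\gamma\epsilon)$, and by the construction of the normal nucleus $V$ contains the largest normal subgroup over which $\chi$ has $\pi$-factored constituents, so $N\le V$; then $\beta=\epsilon_N$ is already a linear $\pi'$-special character with $\beta_{Q\cap N}=(\epsilon_Q)_{Q\cap N}=\delta_{Q\cap N}$, and no extension problem ever arises.

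If you wish to keep an arbitrary nucleus $(U,\psi_0)$, the gap can be closed, but by Lemma \ref{lemma 3.5} rather than by the hoped-for invariance argument: set $W=NU$, note that $\psi_0^W$ is irreducible with $(\psi_0^W)^o\in\ipi(W)$, that the constituents of $(\psi_0^W)_N$ are constituents of $\chi_N$ and hence $\pi$-factored, and that $W=NU$; Lemma \ref{lemma 3.5} then says $\psi_0^W$ is $\pi$-factored with $\pi'$-special factor $\epsilon'$ satisfying $\epsilon'_U=\beta_0$, so $\epsilon'$ is linear, $\epsilon'_Q=\delta$, and $\beta=\epsilon'_N$ together with a $\pi$-special constituent $\alpha$ of the restriction of the $\pi$-special factor to $N$ finishes the proof as in your last paragraph. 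Either way, the invariance you asserted ``should follow'' is precisely what Theorem \ref{second main} or the induction behind Lemma \ref{lemma 3.5} must be invoked to provide; as written, your proof does not establish it.
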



\begin{proof}
Let $V$ be the subgroup of a normal nucleus pair that has $(Q,\delta)$ as a normal vertex.  We know that $V$ contains the largest normal subgroup of $G$ where the restriction of $\chi$ has $\pi$-factored constituents, so $N \le V$.  Since $Q$ is a Hall $\pi$-complement of $V$, it follows that $Q \cap N$ is a Hall $\pi$-complement of $N$.

We have $\gamma \epsilon \in \irr(V)$ where $(\gamma \epsilon)^G = \chi$ and $\gamma$ is $\pi$-special and $\epsilon$ is $\pi'$-special.  We know that $\epsilon_Q = \delta$.  This implies that $\epsilon$ is linear since $\delta$ is linear by Lemma \ref{lemma 2.3}.  Let $\alpha$ be an irreducible constituent of $\gamma_N$, and observe that $\beta = \epsilon_N$ is irreducible.  It follows that $\alpha\beta$ is a constituent of $\chi_N$.  Since $2 \in \pi$, we see that $\beta^o = 1_N$, and so, $\alpha^o$ is a constituent of $\chi^o$.  Finally, $\beta_{Q \cap N} = (\epsilon_N)_{Q \cap N} = (\epsilon_Q)_{Q \cap N} = \delta_{Q \cap N}$.
\end{proof}

This next lemma is related to both Proposition 2.7 and Corollary 2.8 of \cite{pisep}.

\begin{lemma} \label{chief}
Let $G$ be a $\pi$-separable group and suppose that $N$ is normal in $G$ so that $G/N$ is a $\pi$-group.  Let $\nu \in \irr(N)$ so that $\nu = \alpha\beta$ where $\alpha$ is $\pi$-special and $\beta$ is $\pi'$-special.
Then the following are equivalent.
\begin{enumerate}
\item $\beta$ is $G$-invariant.
\item All irreducible constituents of $\nu^G$ are $\pi$-factored.
\item Some irreducible constituent of $\nu^G$ is $\pi$-factored.
\end{enumerate}
\end{lemma}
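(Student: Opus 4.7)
The plan is to establish the three implications $(1)\Rightarrow(2)\Rightarrow(3)\Rightarrow(1)$; the middle implication is trivial, since $\nu^G$ has at least one irreducible constituent.

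For $(3)\Rightarrow(1)$, I would let $\chi$ be a $\pi$-factored irreducible constituent of $\nu^G$ and write $\chi=\rho\tau$, with $\rho$ a $\pi$-special and $\tau$ a $\pi'$-special character of $G$. By Frobenius reciprocity, $\nu=\alpha\beta$ is an irreducible constituent of $\chi_N=\rho_N\tau_N$. Because restriction to a normal subgroup preserves $\pi$-specialness, every irreducible constituent of $\rho_N$ is $\pi$-special and every irreducible constituent of $\tau_N$ is $\pi'$-special; hence the irreducible constituents of $\rho_N\tau_N$ are precisely the products of these pairs, and by the uniqueness of the $\pi$-factorization of an irreducible character, $\beta$ must itself be an irreducible constituent of $\tau_N$. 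By Clifford's theorem, all irreducible constituents of $\tau_N$ lie in a single $G$-orbit of some common size $m$. Then $m$ divides $|G:N|$ and is therefore a $\pi$-number; at the same time, $m$ divides $\tau(1)/\beta(1)$, which is a quotient of $\pi'$-numbers and hence a $\pi'$-number. Consequently $m=1$, so $\beta$ is $G$-invariant.

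For $(1)\Rightarrow(2)$, assume $\beta$ is $G$-invariant. The plan is to invoke Gajendragadkar's extension theorem for $\pi'$-special characters to obtain a unique $\pi'$-special extension $\hat\beta\in\irr(G)$ of $\beta$, and then apply the standard induction identity to write $\nu^G=(\alpha\hat\beta_N)^G=\alpha^G\hat\beta$. Letting $T\le G$ be the stabilizer of $\alpha$, I note that $T/N$ is a $\pi$-group, so $\alpha$ admits a unique $\pi$-special extension $\hat\alpha\in\irr(T)$. Gallagher's theorem then describes the irreducible constituents of $\alpha^T$ as the characters $\hat\alpha\lambda$ with $\lambda\in\irr(T/N)$; since $T/N$ is a $\pi$-group, each $\lambda$ is $\pi$-special, and hence so is $\hat\alpha\lambda$. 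By the Clifford correspondence, each $(\hat\alpha\lambda)^G$ is irreducible, and because $|G:T|$ is a $\pi$-number, induction carries $\pi$-special characters in $\irr(T\mid\alpha)$ to $\pi$-special characters of $G$. Thus every irreducible constituent $\mu$ of $\alpha^G$ is $\pi$-special, and therefore every irreducible constituent of $\nu^G=\alpha^G\hat\beta$ has the form $\mu\hat\beta$, which is the product of a $\pi$-special and a $\pi'$-special character, hence is $\pi$-factored.

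The main obstacle I expect is the careful chaining of the standard $\pi$-special machinery: the extension theorems for the $G$-invariant $\pi'$-special character $\beta$ (to produce $\hat\beta$) and for the $T$-invariant $\pi$-special character $\alpha$ (to produce $\hat\alpha$), together with the fact that Clifford-induction of a $\pi$-special character from the inertia subgroup of $\alpha$ yields a $\pi$-special character of $G$ when $|G:T|$ is a $\pi$-number. These are all standard ingredients of the $\pi$-theory developed in \cite{pisep}, but they must be assembled in the correct order to see that the structure of $\irr(G\mid\nu)$ forces every irreducible constituent to be $\pi$-factored.
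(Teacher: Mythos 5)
Your overall plan coincides with the paper's: for $(1)\Rightarrow(2)$ you use the unique $\pi'$-special extension $\hat\beta$ of the invariant $\pi'$-special character $\beta$ (legitimate, since $G/N$ is a $\pi$-group) and the identity $\nu^G=(\alpha\hat\beta_N)^G=\alpha^G\hat\beta$; for $(3)\Rightarrow(1)$ you use Frobenius reciprocity and uniqueness of the $\pi$-factorization to see that $\beta$ is a constituent of $\tau_N$. Your finish of $(3)\Rightarrow(1)$ is a correct small variant: you count the Clifford orbit of $\beta$ (its size divides $|G:N|$, a $\pi$-number, and also divides $\tau(1)/\beta(1)$, a $\pi'$-number, hence is $1$), whereas the paper instead uses that the restriction of the $\pi'$-special factor to $N$ is already irreducible, so that it equals $\beta$ and is visibly $G$-invariant. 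Either route is fine.

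There is, however, a genuine flaw inside your $(1)\Rightarrow(2)$. You assert that since $T/N$ is a $\pi$-group, the $T$-invariant $\pi$-special character $\alpha$ has a unique $\pi$-special extension $\hat\alpha\in\irr(T)$. This is the wrong coprimality direction: the extension theorems give canonical extensions of $\pi$-special characters over $\pi'$-quotients and of $\pi'$-special characters over $\pi$-quotients; an invariant $\pi$-special character need not extend over a $\pi$-quotient at all. For instance, with $\pi=\{p\}$, $G$ extraspecial of order $p^3$, $N={\bf Z}(G)$ and $\alpha$ a faithful linear character of $N$: here $\alpha$ is $\pi$-special and $G$-invariant and $G/N$ is a $\pi$-group, yet $\alpha$ has no extension to $G$, since every linear character of $G$ contains $G'=N$ in its kernel. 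So the Gallagher step collapses. Similarly, your claim that induction sends $\pi$-special characters of $\irr(T\mid\alpha)$ to $\pi$-special characters of $G$ ``because $|G:T|$ is a $\pi$-number'' is not one of the standard induction/restriction theorems (Theorem C of \cite{induct} goes in the opposite direction), and in your argument it is essentially the very statement you are trying to establish. What you actually need is the standard fact that if $G/N$ is a $\pi$-group and $\alpha\in\irr(N)$ is $\pi$-special, then every irreducible constituent of $\alpha^G$ is $\pi$-special; this is part of the basic $\pi$-special machinery (see \cite{pisep}, Section 40 of \cite{hupte}, or Chapter VI of \cite{Mawo}), and the paper simply quotes it. If you cite that result instead of attempting to rederive it via extensions, your proof of $(1)\Rightarrow(2)$ becomes correct and agrees with the paper's.
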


\begin{proof}
Suppose first that $\beta$ is $G$-invariant.  We know that all irreducible constituents of $\alpha^G$ are $\pi$-special.  Since $|G:N|$ is a $\pi$-number, $\beta$ is $\pi'$-special, and $\beta$ is $G$-invariant, we see that $\beta$ has a $\pi'$-special extension $\hat\beta \in \irrg$.  We know that $\nu^G = (\alpha\beta)^G = \alpha^G \hat\beta$.  We conclude that all the irreducible constituents of $\nu^G$ are the products of irreducible constituents of $\alpha^G$ with $\hat\beta$ and thus they are $\pi$-factored.

Obviously, if all irreducible constituents of $\nu^G$ are $\pi$-factored, then some irreducible constituent of $\nu^G$ is $\pi$-factored.

Suppose some irreducible constituent of $\nu^G$ is $\pi$-factored.  Let $\gamma \delta$ be an irreducible factored constituent of $\nu^G$ where $\gamma$ is $\pi$-special and $\delta$ is $\pi'$-special.  The irreducible constituents of $\gamma_N$ are $\pi$-special, and $\delta_N \in \irr(N)$ is $\pi'$-special.  Hence, the constituents of $(\gamma \delta)_N = \gamma_N \delta_N$ are $\pi$-factored.  By Frobenius reciprocity, $\nu = \alpha \beta$ is a constituent of $\gamma_N \delta_N$.  By the uniqueness of factorization, we conclude that $\delta_N = \beta$, and $\beta$ is $G$-invariant.
\end{proof}

This next result shows that if one character in $L_{\varphi} (Q,\delta)$ has a restriction to a normal subgroup whose irreducible constituents are factored, then all characters in the set $L_{\varphi}(Q, \delta)$ have that property.  We will not explicitly need this fact, but implicitly, we feel that it justifies the inclusion in Lemmas \ref{nicase} and \ref{pidashcase} of the hypothesis that the characters in $L_{\varphi} (Q,\delta)$ have $\pi$-factored constituents when restricted to a normal subgroup $N$.

\begin{lemma} \label{norm fact}
Let $G$ be a $\pi$-separable group, and assume $2 \in \pi$.  Let $\varphi \in \ipig$ have vertex $Q$.  Suppose that $\chi, \psi \in L_{\varphi} (Q,\delta)$.  If $N$ is a normal subgroup of $G$, then $\chi_N$ has $\pi$-factored irreducible constituents if and only if $\psi_N$ has $\pi$-factored irreducible constituents.
\end{lemma}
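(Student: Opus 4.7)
The plan is to induct on $|N|$. The base case $N = 1$ is trivial, so assume $N > 1$ and pick $K \nrml G$ with $K < N$ and $N/K$ a chief factor of $G$, hence either a $\pi$-group or a $\pi'$-group. Since the $\pi$-factored property is preserved under restriction to a normal subgroup, $\chi_N$ factored forces $\chi_K$ factored, and by the inductive hypothesis the same holds for $\psi_K$. The task then reduces to transferring factoredness from $K$ up to $N$ for $\psi$, given that it is known to transfer for $\chi$.

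The key comparison data comes from Lemma \ref{factor} applied at level $K$ to both $\chi$ and $\psi$: each restriction admits a factored constituent whose $\pi'$-special factor restricts to $\delta_{Q \cap K}$ on the Hall $\pi'$-subgroup $Q \cap K$ of $K$. By Lemma \ref{lemma 2.3}, $\delta$ is linear, so this $\pi'$-factor is linear, and since $\pi'$-special characters restrict injectively to a Hall $\pi'$-subgroup, the factor is uniquely determined; call it $\beta_0$. Thus $\chi_K$ and $\psi_K$ have factored constituents $\alpha_\chi\beta_0$ and $\alpha_\psi\beta_0$, and by Clifford all constituents are $G$-conjugate to these. Moreover, $\alpha_\chi$ and $\alpha_\psi$ are $G$-conjugate: using $2 \in \pi$ and the linearity of $\beta_0$ one has $\beta_0^o = 1_K$, so $\chi_K^o = \varphi_K = \psi_K^o$ becomes a single $G$-orbit sum of partial characters of the form $(\alpha^g)^o$ (by Clifford for $\pi$-partial characters), and injectivity of $o$-restriction on $\pi$-special characters forces the asserted conjugacy.

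Now split on the chief factor type. In Case A, where $N/K$ is a $\pi$-group, Lemma \ref{chief} applied to $K \nrml N$ says that constituents of $\nu^N$ are $\pi$-factored iff the $\pi'$-special factor of $\nu$ is $N$-invariant. From $\chi_N$ factored one extracts that $\beta_0$ is $N$-invariant (since $N \nrml G$ makes $N$-invariance of $\beta_0^g$ equivalent to $N$-invariance of $\beta_0$), and then Lemma \ref{chief} yields that every constituent of $\psi_N$ is factored. In Case B, where $N/K$ is a $\pi'$-group, one uses the analog of Lemma \ref{chief} with $\pi$ and $\pi'$ exchanged: a constituent of $\nu^N$ is $\pi$-factored iff the $\pi$-special factor of $\nu$ is $N$-invariant. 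The symmetric argument then runs using the $G$-conjugacy of $\alpha_\chi$ and $\alpha_\psi$ established above. The main obstacle is precisely this $\pi'$-quotient analog of Lemma \ref{chief}, which is not stated explicitly in the paper; however, the proof of Lemma \ref{chief} nowhere invokes the hypothesis $2 \in \pi$ and relies only on Isaacs' extension theory for $\pi'$-special characters across $\pi$-quotients, a theory that is entirely symmetric in $\pi$ and $\pi'$, so the symmetric statement holds by the identical argument with roles exchanged. Once this is verified, the two cases combine to close the induction.
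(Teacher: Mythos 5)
Your proof is correct and follows essentially the same route as the paper: the paper also reduces to a chief factor (via a maximal normal $L \le N$ with $\psi_L$ factored rather than your induction on $|N|$), uses Lemma \ref{factor} plus injectivity of restriction of $\pi'$-special characters to $Q \cap K$ to identify the common $\pi'$-factor, deduces $G$-conjugacy of the $\pi$-special factors, and then applies Lemma \ref{chief} in the $\pi$-case and its $\pi$--$\pi'$ symmetric form in the $\pi'$-case (the paper invokes the symmetric form without comment, handling that case via the conjugate constituent $\alpha_1\beta^{g^{-1}}$ of $\psi_L$ instead of your observation that $N \nrml G$ makes $\alpha_\psi = \alpha_\chi^g$ itself $N$-invariant). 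Your explicit justification that Lemma \ref{chief} is symmetric in $\pi$ and $\pi'$ is a welcome clarification, not a deviation.
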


\begin{proof}
Since the hypotheses in $\chi$ and $\psi$ are symmetric, it suffices to show that if the irreducible constituents of $\chi_N$ are $\pi$-factored, then the irreducible constituents of $\psi_N$ are $\pi$-factored.  Let $L$ be maximal subject to the conditions that $L$ is normal in $G$, $L \leq N$, and the constituents of $\psi_L$ are $\pi$-factored.

If $L = N$, then the result holds.  Hence, we assume that $L < N$, and we find a contradiction.  Observe that $\chi_L$ will have $\pi$-factored irreducible constituents.  By Lemma \ref{factor}, $\chi_L$ has $\alpha_1 \beta_1$ and $\psi_L$ has $\alpha_2 \beta_2$ as irreducible constituents where the $\alpha_i$ are $\pi$-special and $\alpha_i^o$ are constituents of $\varphi_L$ and the $\beta_i$ are $\pi'$-special with $\beta_i|_{Q \cap N} = \delta_{Q \cap N}$.  Since $Q \cap N$ is a Hall $\pi$-complement of $N$, we know that restriction is an injection from the $\pi'$-special characters of $N$ into $\irr(Q \cap N)$.  It follows that $\beta_1 = \beta_2$.  We will write $\beta$ for $\beta_i$ from now on.  Observe that $\alpha_1^o$ and $\alpha_2^o$ are $G$-conjugate, so $\alpha_1$ and $\alpha_2$ are $G$-conjugate.

Let $L < M \le N$ be chosen so that $M$ is normal in $G$ and $M/L$ is a chief factor for $G$.  We know that $M/L$ is either a $\pi$-group or a $\pi'$-group.  Also, some irreducible constituent of $\chi_M$ must be an irreducible constituent of $(\alpha_1 \beta)^M$.  Since the irreducible constituents of $\chi_M$ are $\pi$-factored, we conclude that some irreducible constituent of $(\alpha_1 \beta)^M$ is $\pi$-factored.  If $M/L$ is a $\pi$-group, then Lemma \ref{chief} implies that $\beta$ is $M$-invariant.  Applying Lemma \ref{chief} again, we see that all the irreducible constituents of $(\alpha_2 \beta)^M$ are $\pi$-factored.  Since some irreducible constituent of $\psi_M$ is a constituent of $(\alpha_2 \beta)^M$, we conclude that $\psi_M$ has $\pi$-factored irreducible constituents, and this violates the maximality of $L$.

Thus we may assume that $M/L$ is a $\pi'$-group.  By Lemma \ref{chief}, $\alpha_1$ is $M$-invariant.  We know that there exists $g \in G$ so that $\alpha_2 = \alpha_1^g$.  So $\alpha_1 \beta^{g^{-1}}$ is $G$-conjugate to $\alpha_2 \beta$.  By Lemma \ref{chief}, the irreducible constituents of $(\alpha_1 \beta^{g^{-1}})^M$ are all $\pi$-factored.  Observe that $\psi_M$ must share an irreducible constituent with $(\alpha_1 \beta^{g^{-1}})^M$, and so, the irreducible constituents of $\psi_M$ are $\pi$-factored and this violates the maximality of $L$.  This contradiction proves the result.
\end{proof}

We will not use this next corollary in this paper, but we think it is interesting in its own right.

\begin{corollary} \label{kernels}
Let $G$ be a $\pi$-separable group with $2 \in \pi$.  Let $\varphi \in \ipig$ have vertex $Q$.  If $\chi, \psi \in L_{\varphi} (Q,\delta)$, then $\ker {\chi} = \ker {\psi}$.
\end{corollary}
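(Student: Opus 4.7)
The plan is to induct on $|G|$, splitting into cases according to whether $\chi$ is itself $\pi$-factored.  By Lemma~\ref{norm fact}, the property that $\chi_N$ has $\pi$-factored irreducible constituents for a given normal subgroup $N$ is shared by $\chi$ and $\psi$; so let $N$ be the largest normal subgroup of $G$ with this property.

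If $N = G$, then both $\chi$ and $\psi$ are $\pi$-factored.  Writing $\chi = \alpha\beta$ and $\psi = \alpha'\beta'$ with $\alpha,\alpha'$ $\pi$-special and $\beta,\beta'$ $\pi'$-special, Lemma~\ref{lemma 2.3} gives that $\beta,\beta'$ are linear and that $Q$ is a Hall $\pi$-complement of $G$.  The vertex condition gives $\beta_Q = \delta = \beta'_Q$, and the injectivity of restriction of $\pi'$-special characters to a Hall $\pi$-complement forces $\beta = \beta'$.  Then $\alpha^o = \chi^o = \varphi = \psi^o = (\alpha')^o$, and the bijection between $\bpig$ and $\ipig$ given by restriction forces $\alpha = \alpha'$; hence $\chi = \psi$, which is stronger than $\ker\chi = \ker\psi$.

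Now suppose $N < G$.  By Lemma~\ref{factor}, I can pick constituents $\alpha\beta$ of $\chi_N$ and $\alpha'\beta$ of $\psi_N$ sharing a common linear $\pi'$-special factor $\beta$ (uniquely determined by $\beta_{Q \cap N} = \delta_{Q \cap N}$ via the injectivity of restriction to the Hall $\pi$-complement $Q \cap N$ of $N$) and having $G$-conjugate $\pi$-special factors $\alpha,\alpha'$.  Setting $\theta = \alpha\beta$ and $T = G_\theta$, the goal is to arrange that $\theta$ is a common constituent of $\chi_N$ and $\psi_N$, so that Clifford correspondents $\tilde\chi, \tilde\psi \in \irr(T \mid \theta)$ both exist with $\tilde\chi^G = \chi$ and $\tilde\psi^G = \psi$.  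Both $\tilde\chi$ and $\tilde\psi$ are then lifts of the Clifford correspondent of $\varphi$ over $\alpha^o \in \ipi(N)$, and by the recursive structure of the normal nucleus construction both have generalized vertex $(Q,\delta)$ in $T$.  Since $T < G$, the inductive hypothesis yields $\ker\tilde\chi = \ker\tilde\psi$ in $T$; and since the kernel of an induced irreducible character is the intersection of the $G$-conjugates of the kernel of the inducing character, this yields $\ker\chi = \ker\psi$ in $G$.

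The step I anticipate as the main obstacle is arranging the common constituent $\theta$.  The $G$-conjugacy of $\alpha$ and $\alpha'$ produces some $g_0 \in G$ with $\alpha' = \alpha^{g_0}$, but $\alpha'\beta = \alpha^{g_0}\beta$ is a $G$-conjugate of $\alpha\beta$ only if $g_0$ lies in $G_\beta \cdot G_\alpha$, which is not automatic when $\beta$ is not $G$-invariant.  I would try to resolve this by leveraging the hypothesis that $\chi$ and $\psi$ have the exact generalized vertex $(Q,\delta)$ (rather than merely conjugate vertex pairs): the normal nuclei of $\chi$ and $\psi$ realizing this specific pair form a single $\ngqd$-orbit, so one may replace $\psi$ by a suitable $\ngqd$-conjugate (an operation that alters neither the kernel of $\psi$ nor its generalized vertex pair) to make the two normal nuclei compatible and hence the common $\theta$ available.
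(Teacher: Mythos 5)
There is a genuine gap, and it sits exactly where you flagged it. In the case $N<G$, Lemma \ref{factor} only gives you constituents $\alpha\beta$ of $\chi_N$ and $\alpha'\beta$ of $\psi_N$ with $\alpha,\alpha'$ merely $G$-conjugate, and your whole inductive step depends on upgrading this to a \emph{common} constituent $\theta$. Your proposed repair rests on the assertion that the normal nuclei of $\chi$ and of $\psi$ realizing the specific pair $(Q,\delta)$ form a single $\ngqd$-orbit, so that $\psi$ can be replaced by an $\ngqd$-conjugate making the constituents match. Nothing proved in the paper supports a statement comparing nuclei of two \emph{different} characters: Theorem \ref{second main} only says that the generalized vertices of a single lift are $G$-conjugate. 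Concretely, your fix requires $\alpha$ and $\alpha'$ to be $\ngqd$-conjugate (conjugation by $\ngqd$ does fix $\beta$, by restricting to the Hall complement $Q\cap N$), and you give no argument for this; it is precisely the hard point, not a reduction of it. There is a second, related problem in the inductive step even granting a common $\theta$: you assert that the Clifford correspondents $\tilde\chi,\tilde\psi\in\irr(T\mid\theta)$ ``both have generalized vertex $(Q,\delta)$ in $T$.'' Theorem \ref{second main} applied in $T$ only controls $T$-conjugacy, and as Lemma \ref{nicase} makes explicit, the correspondents of characters in $L_{\varphi}(Q,\delta)$ may have vertices $(Q,\delta_i)$ with $\delta_i$ an $\ngq$-translate of $\delta$ that is not $T$-conjugate to $\delta$; so $\tilde\chi$ and $\tilde\psi$ need not lie in the same set $L_{\xi}(Q,\delta)$, and the inductive hypothesis need not apply to the pair. (Your $N=G$ case is fine and in fact shows $\chi=\psi$ there.)

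For comparison, the paper's proof needs no induction and no matching of nuclei: it takes $K=\ker{\chi}$, notes that the constituents of $\chi_K$ are $1_K$ and hence trivially $\pi$-factored, invokes Lemma \ref{norm fact} to conclude that $\psi_K$ also has $\pi$-factored constituents, and then applies Lemma \ref{factor} to both characters with $N=K$. For $\chi$ this forces $\delta_{Q\cap K}=1_{Q\cap K}$ and shows $1_K^o$ is the constituent of $\varphi_K$; for $\psi$ it then forces the constituent $\alpha\beta$ of $\psi_K$ to satisfy $\alpha^o=1_K^o$ and $\beta_{Q\cap K}=1_{Q\cap K}$, hence $\alpha=\beta=1_K$ (using injectivity of restriction of $\pi'$-special characters to the Hall complement $Q\cap K$), so $K\le\ker{\psi}$, and symmetry finishes. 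If you want to salvage your approach, the cleanest route is to abandon the induction and apply your own first reduction (Lemma \ref{norm fact}) directly to the single normal subgroup $\ker{\chi}$ rather than to a maximal $N$.
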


\begin{proof}
Given the symmetry between $\chi$ and $\psi$, it suffices to prove that $\ker {\chi} \le \ker {\psi}$.  Let $K = \ker {\chi}$.  Since the irreducible constituents of $\chi_K$ are $1_K$, the irreducible constituents of $\chi_K$ are $\pi$-factored.  By Lemma \ref{norm fact}, the irreducible constituents of $\psi_K$ are $\pi$-factored.  Observe that $1_K^o$ is the irreducible constituent of $\varphi_K$ and $1_{Q \cap K} = \delta_{Q \cap K}$.  By Lemma \ref{factor}, we know that $\psi_K$ has an irreducible constituent $\alpha \beta$ where $\alpha$ is $\pi$-special and $\alpha^o$ is a constituent of $\varphi_K$ and $\beta$ is $\pi'$-special and $\beta_{Q \cap K} = \delta_{Q \cap K}$.  It follows that $\alpha^o = 1_K^o$ which implies that $\alpha = 1_K$ and $\beta_{Q \cap K} = 1_{Q \cap K}$ which implies that $\beta = 1_K$.  We conclude that $1_K$ is a constituent of $\psi_K$ and thus $K \le \ker {\psi}$.
\end{proof}

\section{Counting Lifts}

We now work to prove that if $G$ is a solvable group and $\pi$ is a set of primes with $2 \in \pi$, then the number of lifts of a character $\varphi \in \ipig$ with {\it{abelian}} vertex subgroup $Q$ is bounded above by $|Q|$.  When $Q$ is contained in ${\bf O}_{\pi \pi'} (G)$, we prove that the number of lifts of $\varphi$ is at most $|Q:Q'|$.

Our argument is essentially an inductive argument.  In this next result, we consider one of the cases that arises.

\begin{lemma} \label{nicase}
Let $G$ be a $\pi$-separable group, and assume $2 \in \pi$.
Suppose $\varphi \in \ipig$ has vertex subgroup $Q$, and $N$ is normal in $G$ with $\alpha \in \ipi(N)$ a constituent of $\varphi_N$ such that the Clifford correspondent $\xi$ of $\varphi$ in $\ipi(G_{\alpha} \mid \alpha)$ has vertex subgroup $Q$, and write $T = G_{\alpha}$.  Fix a character $\delta \in \irr(Q)$. Let $\delta_1, \delta_2, \ldots, \delta_k$ denote a set of representatives of the $\ntq$ orbits of the action of $\ntq$ on the $\ngq$ orbit containing $\delta$.  Suppose the characters in $L_{\varphi} (Q,\delta)$ have $\pi$-factored constituents when restricted to $N$.  Then induction is a bijection from $\bigcup L_{\xi} (Q, \delta_i)$ to $L_{\varphi} (Q, \delta).$
\end{lemma}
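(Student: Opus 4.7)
The plan is to exhibit the map as a restriction of the ordinary Clifford correspondence. Let $\hat\alpha \in \B{\pi}{N}$ be the unique $\pi$-special character with $\hat\alpha^o = \alpha$. Since $\pi$-special characters of $N$ are determined by their values on $\pi$-elements, the $G$-stabilizer of $\hat\alpha$ equals $T = G_\alpha$, so induction is a bijection $\irr(T \mid \hat\alpha) \to \irr(G \mid \hat\alpha)$; the task is to identify $L_{\varphi}(Q, \delta)$ and $\bigcup_i L_\xi(Q, \delta_i)$ as the subsets of each side matched by this correspondence. As a preliminary, $L_{\varphi}(Q, \delta) \subseteq \irr(G \mid \hat\alpha)$: given $\chi \in L_{\varphi}(Q, \delta)$, the hypothesis together with Lemma \ref{factor} supplies a $\pi$-factored constituent $\alpha_0 \beta_0$ of $\chi_N$ with $\alpha_0^o$ a constituent of $\varphi_N$. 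Because $\varphi_N$ is the $G$-orbit sum of $\alpha$, this gives $\alpha_0^o = \alpha^g$ for some $g \in G$, so $\alpha_0 = \hat\alpha^g$ by uniqueness of $\pi$-special lifts, and then $\hat\alpha$ itself is a constituent of $\chi_N$ by Clifford's theorem applied to $N \nrml G$.

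Next I would analyze the Clifford correspondent $\eta \in \irr(T \mid \hat\alpha)$ of $\chi$. Since $T$ fixes $\hat\alpha$, we have $\eta_N = e \hat\alpha$, and hence $(\eta^o)_N = e\alpha$, so every irreducible summand of $\eta^o$ in $\ipi(T)$ lies over $\alpha$. By the Clifford correspondence for partial characters $\ipi(T \mid \alpha) \to \ipi(G \mid \alpha)$, each such summand induces irreducibly to $\ipi(G \mid \alpha)$; irreducibility of $(\eta^o)^G = \varphi$ then forces $\eta^o = \xi$, so $\eta \in L_\xi$. To locate $\eta$'s vertex at $Q$, I use the assumption that $\xi$ itself has vertex subgroup $Q$ in $T$. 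By Lemma \ref{lemma 2.3}, the Hall $\pi$-complement of any generalized nucleus of $\eta$ in $T$ is a vertex of $\xi$; since the vertices of $\xi$ in $T$ form a single $T$-orbit by Isaacs--Navarro, after $T$-conjugation I may assume $\eta$ has a generalized nucleus $(U'', \psi'')$ in $T$ whose Hall $\pi$-complement is exactly $Q$. This produces a generalized vertex $(Q, \delta'')$ of $\eta$ in $T$. The same pair $(U'', \psi'')$ is a generalized nucleus for $\eta^G = \chi$ in $G$, so $(Q, \delta'')$ is also a vertex of $\chi$, and Theorem \ref{second main} applied in $G$ yields $\delta'' \in \ngq \cdot \delta$. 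Hence $\delta''$ lies in the $\ntq$-orbit of a unique $\delta_i$, and conjugating the nucleus by a suitable element of $\ntq$ shows $\eta \in L_\xi(Q, \delta_i)$.

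For the reverse direction, if $\eta \in L_\xi(Q, \delta_i)$ is witnessed by $(U'', \psi'')$ in $T$, the same pair witnesses $(Q, \delta_i)$ as a generalized vertex of $\eta^G$ in $G$; since $\delta_i$ lies in the $\ngq$-orbit of $\delta$, Theorem \ref{second main} gives $(Q, \delta)$ as a vertex of $\eta^G$ too, so $\eta^G \in L_{\varphi}(Q, \delta)$. The union $\bigcup_i L_\xi(Q, \delta_i)$ is disjoint because Theorem \ref{second main} applied in $T$ makes the vertices of each $\eta$ at $Q$ a single $\ntq$-orbit, while the $\delta_i$ represent distinct $\ntq$-orbits. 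Combined with the Clifford bijection from the first paragraph, induction is the required bijection. The main obstacle is the vertex bookkeeping: Theorem \ref{second main} must be invoked both in $G$ --- to transport the data $(Q, \delta)$ of $\chi$ onto a nucleus that actually lives inside $T$ --- and in $T$, to guarantee the uniqueness of the index $i$ matching each $\eta$.
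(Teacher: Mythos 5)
Your framework breaks at its very first step. You claim that every $\chi \in L_{\varphi}(Q,\delta)$ lies over $\hat\alpha$: from Lemma \ref{factor} you get a constituent $\hat\alpha^{g}\beta_0$ of $\chi_N$ and then conclude that ``$\hat\alpha$ itself is a constituent of $\chi_N$ by Clifford's theorem.'' That conclusion is false in general. The irreducible constituents of $\chi_N$ form the single $G$-orbit of the factored character $\hat\alpha^{g}\beta_0$, where $\beta_0$ is the $\pi'$-special character of $N$ with $(\beta_0)_{Q\cap N} = \delta_{Q\cap N}$; by uniqueness of the $\pi$-special/$\pi'$-special factorization, $\hat\alpha$ lies in that orbit only when $\beta_0 = 1_N$, i.e.\ only when $\delta_{Q\cap N}$ is trivial. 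The lemma is needed (and is applied in Theorem \ref{avhardpart}) for arbitrary $\delta$, where $\delta_{Q \cap N}$ is typically nontrivial. For the same reason the characters in $\bigcup_i L_{\xi}(Q,\delta_i)$ need not lie in $\irr(T \mid \hat\alpha)$. So the ordinary Clifford correspondence $\irr(T\mid\hat\alpha)\to\irr(G\mid\hat\alpha)$ is not an ambient bijection containing the map you want: the ``Clifford correspondent $\eta\in\irr(T\mid\hat\alpha)$ of $\chi$'' need not exist, and, more seriously, your injectivity claim rests entirely on that bijection and is therefore unproven.

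Injectivity is in fact the delicate point, precisely because $T=G_{\alpha}$ stabilizes $\hat\alpha$ but not $\hat\alpha\beta_0$, so no single Clifford correspondence over a $T$-invariant character of $N$ does the job. The paper's proof instead works with the factored constituent $\gamma\beta$ of $\chi_N$ (with $\gamma$ the $\pi$-special lift of $\alpha$), takes the Clifford correspondent $\omega\in\irr(G_{\gamma\beta}\mid\gamma\beta)$, and induces in two steps $G_{\gamma\beta}\to T\to G$; for injectivity, given $\mu_1^G=\mu_2^G$ it uses uniqueness of factorization to find $g\in T$ with $(\gamma\beta_2)^g=\gamma\beta_1$, so that both $\mu_j$ lie over $\gamma\beta_1$ and arise from $\irr(G_{\gamma\beta_1}\mid\gamma\beta_1)$, where the genuine Clifford correspondence applies. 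Your vertex bookkeeping in the surjectivity direction (a generalized vertex of $\mu$ has subgroup $T$-conjugate to $Q$ by Lemma \ref{lemma 2.3} and Isaacs--Navarro, Theorem \ref{second main} in $G$ puts the vertex character in the $\ngq$-orbit of $\delta$, and a final $\ntq$-conjugation lands on some $\delta_i$) is essentially the paper's argument and is fine, but it must be run on $\mu=\omega^T$ constructed as above, not on a correspondent over $\hat\alpha$, which in general does not exist.
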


\begin{proof}
Using the Clifford correspondence for $\pi$-partial characters, we know that $\xi^G = \varphi$.  If $\theta \in \bigcup L_{\xi} (Q, \delta_i)$, then $(\theta^G)^o = (\theta^o)^G = (\xi)^G = \phi$.  Thus, induction is a map from $\bigcup L_{\xi} (Q, \delta_i)$ to $L_{\varphi}$.  Since $\theta \in L_{\xi} (Q, \delta_i)$, it follows from Theorem \ref{second main}that $\theta^G$ has vertex $(Q, \delta_i)$, which is $G$-conjugate to $(Q, \delta)$, and thus, induction is a map from $\bigcup L_{\xi} (Q, \delta_i)$ to $L_{\varphi} (Q, \delta)$.

To see that induction is surjective, suppose $\chi \in
L_{\varphi} (Q, \delta)$.  By Lemma \ref{factor}, $\chi_N$ has an irreducible constituent $\gamma \beta$ where $\gamma$ is $\pi$-special and $\beta$ is $\pi'$-special and $\gamma$ lifts $\alpha$.  Thus, $\chi$ is induced irreducibly from some character $\omega \in \irr(G_{\gamma \beta} \mid \gamma \beta)$, and note that $G_{\gamma \beta} \subseteq G_{\alpha}$ and $\omega^o$ must induce to $\xi$.  Let $\mu = \omega^T$, so $\mu^o = \xi$, and $\mu^G = \chi$.  Applying Theorem \ref{second main}, $\mu$ has a vertex that is $G$-conjugate to $(Q, \delta)$, so $\mu$ has a vertex that is $T$-conjugate to one of the $(Q, \delta_i)$. Thus, $\mu \in \bigcup L_{\xi}(Q, \delta_i)$ for some character $\delta_i$.

Finally, we show that induction is injective.  Suppose we have $\mu_1, \mu_2 \in \bigcup L_{\xi} (Q, \delta_i)$ with $\mu_1^G = \mu_2^G$.  Let $\chi = \mu_1^G = \mu_2^G$.  Notice that $(\mu_j^o)_N = \xi_N = e \alpha$ for some integer $e$. Let $\gamma$ be $\pi$-special and lift $\alpha$.  By Lemma \ref{factor}, we can find $\pi'$-special characters $\beta_1$ and $\beta_2$ so that $\gamma \beta_j$ is a constituent of $(\mu_j)_N$.  Notice that this implies that $\gamma \beta_1$ and $\gamma \beta_2$ are both constituents of $\chi_N$. It follows that there exists $g \in G$ so that $(\gamma \beta_2)^g = \gamma \beta_1$.  This implies that $\gamma^g = \gamma$ and $\beta_2^g = \beta_1$.  It follows that $\alpha^g = \alpha$ and thus $g \in T$.  This implies that $\gamma \beta_1$ is a constituent of $(\mu_2)_N$.  We can find $\omega_1, \omega_2 \in \irr(G_{\gamma \beta_1} \mid \gamma \beta_1)$ so that $\omega_j^T = \mu_j$.  We now have $(\omega_j)^G = \chi$, and since Clifford induction is a bijection, we conclude that $\omega_1 = \omega_2$ and thus $\mu_1 = \mu_2$.
\end{proof}

We will not actually need that the induction map in Lemma \ref{nicase} is injective, only that it is surjective.  The fact that the map is injective implies that the union $\bigcup L_{\xi} (Q, \delta_i)^G$ is disjoint, though again, we do not need that.  The next lemma also considers a case that arises in the inductive argument.

\begin{lemma} \label{pidashcase}
Let $G$ be a $\pi$-separable group, and assume $2 \in \pi$.  Suppose $\varphi \in \ipig$ has vertex subgroup $Q$, and $N$ is normal in $G$ so that $\varphi_N$ is homogeneous.  Let $(Q,\delta)$ be a generalized vertex for a lift of $\varphi$.  Suppose the characters in $L_{\varphi} (Q,\delta)$ have $\pi$-factored constituents when restricted to $N$.  Write $\beta \in \irr(N)$ for the $\pi'$-special character so that $\beta_{Q \cap N} = \delta_{Q \cap N}$.  Let $T$ be the stabilizer of $\beta$ in $G$. Let $\zeta_1, \dots, \zeta_k$
be the distinct $\pi$-partial characters in $\ipi(T)$ that induce $\varphi$ and have $Q$ as a vertex.  Then induction is a bijection from $\bigcup L_{\zeta_i} (Q, \delta)$ to $L_{\varphi} (Q,\delta)$.
\end{lemma}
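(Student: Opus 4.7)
The plan is to parallel the structure of Lemma~\ref{nicase}, but to perform the Clifford decomposition over the $\pi'$-special character $\beta \in \irr(N)$ rather than over a $\pi$-partial character, exploiting the homogeneity of $\varphi_N$ to pin down the relevant $\pi$-special factor. Specifically, $\varphi_N = e\alpha^o$ for a $G$-invariant $\alpha^o \in \ipi(N)$, so its $\pi$-special lift $\alpha \in \irr(N)$ is uniquely determined and $G$-invariant. Together with the uniqueness of a $\pi'$-special character of $N$ whose restriction to the Hall $\pi$-complement $Q \cap N$ of $N$ agrees with $\delta|_{Q \cap N}$, this produces a canonical factored character $\alpha\beta$ of $N$ with $G_{\alpha\beta} = G_\alpha \cap G_\beta = G_\beta = T$, on which Clifford's theorem can operate cleanly.

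First I would check that induction sends $\bigcup L_{\zeta_i}(Q,\delta)$ into $L_\varphi(Q,\delta)$: for $\theta \in L_{\zeta_i}(Q,\delta)$, $(\theta^G)^o = (\theta^o)^G = \zeta_i^G = \varphi$, and any generalized nucleus $(U, \gamma\epsilon)$ of $\theta$ inside $T$ is also a generalized nucleus of $\theta^G$ (by transitivity of induction), so $\theta^G$ retains the vertex $(Q,\delta)$. For injectivity, observe that each $\zeta_i \in \ipi(T)$ satisfies $\zeta_i|_N = e_i \alpha^o$, because $\zeta_i$ is a constituent of $\varphi|_T$ and $\varphi|_N = e\alpha^o$. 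Hence Lemma~\ref{factor} applied to any $\mu \in L_{\zeta_i}(Q,\delta)$ identifies $\alpha\beta$ as a constituent of $\mu|_N$, so any two $\mu_1, \mu_2$ in the union with a common induction to $G$ both lie in $\irr(T \mid \alpha\beta)$, and Clifford's theorem forces $\mu_1 = \mu_2$ (and hence also $\zeta_{i_1} = \zeta_{i_2}$).

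The surjectivity direction is where the real work lies. Given $\chi \in L_\varphi(Q,\delta)$, Lemma~\ref{factor} together with the uniqueness of $\alpha$ and $\beta$ above locates $\alpha\beta$ as a constituent of $\chi|_N$; Clifford's theorem then yields a unique $\xi \in \irr(T \mid \alpha\beta)$ with $\xi^G = \chi$, and a standard argument (any decomposition of $\xi^o$ would induce to a proper decomposition of $\varphi$) shows $\xi^o$ is irreducible in $\ipi(T)$ with $(\xi^o)^G = \varphi$. The main obstacle is to prove $\xi$ has generalized vertex $(Q,\delta)$, from which Lemma~\ref{lemma 2.3} will give $Q$ as a vertex of $\xi^o$ and hence $\xi^o = \zeta_i$ for some $i$. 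My plan is to take a normal nucleus $(V, \gamma\epsilon)$ of $\chi$ with vertex $(Q, \delta)$, guaranteed up to conjugacy by Theorem~\ref{second main}. Since $\chi|_N$ has $\pi$-factored constituents by hypothesis, $N$ lies in the maximal normal subgroup used at the top level of the recursive normal nucleus construction, and the recursion preserves this containment, so $N \leq V$. Then $\epsilon|_N$ is a linear $\pi'$-special character of $N$ with $(\epsilon|_N)|_{Q \cap N} = \delta|_{Q \cap N}$, which by uniqueness forces $\epsilon|_N = \beta$; since $\epsilon$ is $V$-invariant, so is $\beta$, giving $V \leq T$.

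With $V \leq T$ in hand, $((\gamma\epsilon)^T)^G = \chi$ is irreducible, which forces $(\gamma\epsilon)^T$ itself to be irreducible. A parallel computation $\gamma^o|_N = e'\alpha^o$ (using $(\gamma^o)^G = \varphi$ together with homogeneity of $\varphi|_N$) and uniqueness of $\pi$-special lifts give $\gamma|_N = e'\alpha$, so $(\gamma\epsilon)|_N$ contains $\alpha\beta$; this places $(\gamma\epsilon)^T$ in $\irr(T \mid \alpha\beta)$, and Clifford's uniqueness identifies it with $\xi$. Therefore $(V, \gamma\epsilon)$ is a generalized nucleus of $\xi$ in $T$, giving $\xi$ the required generalized vertex $(Q,\delta)$ and completing surjectivity. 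The hardest step throughout is engineering $V \leq T$ from the containment $N \leq V$ and the linearity of $\epsilon$; the rest is careful bookkeeping with Clifford correspondence and the machinery already built in this section.
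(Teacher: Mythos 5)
Your argument is correct, and its overall skeleton matches the paper's: the into-ness and injectivity parts (everything in $\bigcup L_{\zeta_i}(Q,\delta)$ lies in $\irr(T\mid\alpha\beta)$ by Lemma~\ref{factor} plus uniqueness of the $\pi$-special lift of $\alpha^o$ and of the $\pi'$-special character restricting to $\delta_{Q\cap N}$, then Clifford correspondence) are essentially identical to the published proof. Where you genuinely diverge is the heart of surjectivity, namely showing that the Clifford correspondent $\xi\in\irr(T\mid\alpha\beta)$ of $\chi$ actually has $(Q,\delta)$ as a generalized vertex. The paper takes an arbitrary generalized vertex $(Q^*,\delta^*)$ of $\xi$, uses Lemma~\ref{factor} inside $T$ to force its $\pi'$-special datum on $N$ to be $\beta$, then applies Theorem~\ref{second main} to $\chi$ to produce $g\in G$ with $(Q^*,\delta^*)=(Q,\delta)^g$, and shows $g\in T$ via injectivity of restriction of $\pi'$-special characters to $Q^g\cap N$, so that $(Q,\delta)$ is a $T$-conjugate of a vertex of $\xi$. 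You instead invoke Theorem~\ref{second main} only to choose a normal nucleus $(V,\gamma\epsilon)$ of $\chi$ whose vertex is exactly $(Q,\delta)$, use $N\le V$ (the same containment the paper itself relies on in the proof of Lemma~\ref{factor}) together with linearity of $\epsilon$ (Lemma~\ref{lemma 2.3}) to get $\epsilon_N=\beta$ and hence $V\le T$, and then identify $(\gamma\epsilon)^T$ with $\xi$ via Clifford uniqueness, exhibiting the vertex directly; the containment $Q\le T$ comes for free, whereas the paper has to extract it from the conjugation argument. Both routes rest on the same three pillars (Lemma~\ref{factor}, Theorem~\ref{second main}, Clifford correspondence), so the difference is one of organization rather than of underlying ideas: your version trades the ``conjugate the vertex back into $T$'' step for an explicit appeal to the normal nucleus construction and the chain $N\le V\le T$, which is arguably more constructive but leans a bit more heavily on Navarro's nucleus machinery. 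The small steps you leave implicit --- that $\gamma_N$ is a multiple of $\alpha$ because $\pi$-special characters are determined by their restrictions to $\pi$-elements, and that $\epsilon_N$ is $\pi'$-special because $\epsilon$ is linear of $\pi'$-order --- are standard and are used in the same way by the paper, so I see no gap.
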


\begin{proof}
Let $\alpha$ be the $\pi$-special character in $\irr(N)$ so that $\alpha^o$ is the unique irreducible constituent of $\varphi_N$.  Observe that $\alpha^o$ will be the unique irreducible constituent of $(\zeta_i)_N$ for all $i$.  Suppose $\theta \in L_{\zeta_i} (Q,\delta)$.  Then $(\theta^G)^o = (\theta^o)^G = (\zeta_i)^G = \varphi$, and thus, $\theta^G \in L_{\varphi}$.  Note that $(Q,\delta)$ will be a generalized vertex of $\theta^G$, and so, induction is a map from $\bigcup L_{\zeta_i} (Q,\delta)$ to $L_{\varphi} (Q,\delta)$.

Suppose $\chi \in L_{\varphi} (Q,\delta)$.  By Lemma \ref{factor}, we see that $\alpha \beta$ is an irreducible constituent of $\chi_N$.  Observe that $\alpha$ is $G$-invariant, since $\alpha^o$ is $G$-invariant.  Thus, $T$ is the stabilizer of $\alpha \beta$ in $G$.  We can find $\theta \in \irr(T \mid \alpha \beta)$ so that $\theta^G = \chi$.  Since $(\theta^o)^G = (\theta^G)^o = \chi^o = \varphi$, it follows that $\theta \in \irr(T)$ is a lift of a character in $\ipi(T)$ that induces $\varphi$.

Let $(Q^*,\delta^*)$ be a generalized vertex for $\theta$.  By Lemma \ref{factor}, we know that $\theta_N$ has an irreducible
constituent $\alpha \beta^*$ where $\beta^*$ is $\pi'$-special in $\irr(N)$ so that $(\beta^*)_{Q^* \cap N} = \delta^*_{Q^* \cap N}$.  Observe that $\alpha\beta$ is the unique irreducible constituent of $\theta_N$.  Hence, we have $\alpha\beta = \alpha \beta^*$.  The uniqueness of factorization implies that $\beta = \beta^*$.

Since $(Q,\delta)$ and $(Q^*,\delta^*)$ are both generalized
vertices for $\chi$, we may apply Theorem \ref{second main} to find an element $g \in G$, so that $(Q^*,\delta^*) = (Q,\delta)^g$.  We see that $\beta^g_{Q^g \cap N} = \delta^*_{Q^g \cap N} = \beta_{Q^g \cap N}$.  By Lemma \ref{factor}, $Q^g \cap N$ is a Hall $\pi$-complement of $N$.  We know that restriction is an injection from the $\pi'$-special characters of $N$ to $\irr(Q^g \cap N)$. We
conclude that $\beta^g = \beta$.  It follows that $g \in T$, and hence, $Q \le T$, and $(Q,\delta)$ is a generalized vertex for $\theta$.  Notice that $Q$ is now a vertex for $\theta^o$, and so, $\theta^o = \zeta_i$ for some $i$.  It follows that induction is a surjective map from $\bigcup L_{\zeta_i} (Q,\delta)$ onto $L_{\varphi} (Q,\delta)$.

Finally, notice that by Lemma \ref{factor} the elements of $\bigcup L_{\zeta_i} (Q,\delta)$ all lie in $\irr(T \mid \alpha \beta)$.  We now apply Clifford's theorem to see the map is injective.
\end{proof}

We now state and prove the result that forms the main part of our argument.  In particular, we find a bound on the number of lifts of $\varphi$ that have generalized vertex $(Q,\delta)$.

\begin{theorem} \label{avhardpart}
Assume $G$ is a $\pi$-separable group and $2 \in \pi$.
Suppose that $\varphi \in \ipig$ has vertex subgroup $Q$, and let $\delta \in \irr(Q)$.  Assume either $Q$ is abelian or $Q \le {\bf O}_{\pi \pi'} (G)$.  Then $|L_{\varphi} (Q, \delta)| \leq |\ngq : \norm G{Q, \delta}|$.
\end{theorem}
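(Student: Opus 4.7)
The plan is to induct on $|G|$, taking a minimal normal subgroup $N$ of $G$. Because $G$ is $\pi$-separable, $N$ is either a $\pi$-group or a $\pi'$-group, so the irreducible constituents of $\chi_N$ are automatically $\pi$-factored for every $\chi\in L_{\varphi}(Q,\delta)$; this supplies the factored-restriction hypothesis in Lemmas \ref{nicase} and \ref{pidashcase}. Let $\alpha$ be an irreducible constituent of $\varphi_N$ and split into two cases according to whether or not $\alpha$ is $G$-invariant.

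If $\alpha$ is not $G$-invariant, set $T=G_{\alpha}<G$ and let $\xi\in\ipi(T\mid\alpha)$ be the Clifford correspondent of $\varphi$; after replacing $\alpha$ by a $G$-conjugate if necessary, I may arrange that $\xi$ has vertex $Q$. Lemma \ref{nicase} then provides a bijection $\bigcup_{i=1}^{k} L_{\xi}(Q,\delta_i)\to L_{\varphi}(Q,\delta)$, where $\delta_1,\dots,\delta_k$ represent the $\norm{T}{Q}$-orbits on the $\ngq$-orbit of $\delta$. Since $|T|<|G|$ and either the abelianness of $Q$ or the hypothesis $Q\le{\bf O}_{\pi\pi'}(G)$ transfers to $T$, induction gives $|L_{\xi}(Q,\delta_i)|\le|\norm{T}{Q}:\norm{T}{Q,\delta_i}|$. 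Summing and using that the $\norm{T}{Q}$-orbits partition the $\ngq$-orbit of $\delta$ then yields
\[
|L_{\varphi}(Q,\delta)| \;\le\; \sum_{i=1}^{k}|\norm{T}{Q}:\norm{T}{Q,\delta_i}| \;=\; |\ngq:\ngqd|.
\]

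If instead $\varphi_N$ is homogeneous, let $\beta\in\irr(N)$ be the $\pi'$-special character from Lemma \ref{factor} with $\beta_{Q\cap N}=\delta_{Q\cap N}$, set $T=G_{\beta}$, and apply Lemma \ref{pidashcase} to obtain a bijection $\bigcup_{i=1}^{k}L_{\zeta_i}(Q,\delta)\to L_{\varphi}(Q,\delta)$, where $\zeta_1,\ldots,\zeta_k\in\ipi(T)$ are the distinct partial characters inducing $\varphi$ that have vertex $Q$. Since any $g\in\ngqd$ fixes $\delta_{Q\cap N}=\beta_{Q\cap N}$ and restriction from the $\pi'$-special characters of $N$ to $\irr(Q\cap N)$ is injective, one has $\ngqd\le T$ and so $\ngqd=\norm{T}{Q,\delta}$. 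When $T<G$, induction bounds each $|L_{\zeta_i}(Q,\delta)|\le|\norm{T}{Q}:\ngqd|$, and the bound $k\le|\ngq:\norm{T}{Q}|$ (obtained by a coset-counting argument parameterizing the $\zeta_i$ through the $\ngq$-action relative to the $T$-stabilizer structure) collapses the total to $|\ngq:\ngqd|$.

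The main obstacle will be the case $T=G$, in which $\beta$ is $G$-invariant and Lemma \ref{pidashcase} degenerates to $k=1$, $\zeta_1=\varphi$, giving no immediate reduction. Here the special hypothesis on $Q$ becomes essential. When $N$ is a $\pi$-group, one has $\beta=1_N$, and I would pass to $G/N$: verifying that $QN/N$ inherits either abelianness or the ${\bf O}_{\pi\pi'}$-hypothesis, and that lifts of $\varphi$ correspond to lifts of its image in $\ipi(G/N)$, reduces the problem to a smaller group. When $N$ is a $\pi'$-group, the $G$-invariance of $\beta$ together with $\pi$-separability produces a $\pi'$-special extension $\hat\beta\in\irrg$; twisting $\chi\mapsto\chi\hat\beta^{-1}$ bijects $L_{\varphi}(Q,\delta)$ with the lifts of a modified partial character having vertex $(Q,\delta\hat\beta_Q^{-1})$, and iterating this reduction together with the $Q$-hypothesis eventually lands in a previously handled case. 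The most delicate point will be ensuring that the specific hypothesis on $Q$ survives each step of this descent.
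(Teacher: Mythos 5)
Your overall induction shape (Clifford reduction via Lemma \ref{nicase}, then a reduction to the stabilizer of the $\pi'$-special character $\beta$ via Lemma \ref{pidashcase}) matches the paper, but the proposal has a genuine gap at exactly the step that is the heart of the theorem. In your homogeneous case you set $T = G_{\beta}$ for $\beta \in \irr(N)$ with $N$ a minimal normal subgroup, and you assert $k \le |\ngq : \norm{T}{Q}|$ by an unspecified ``coset-counting argument.'' Degree counting only gives $k \le |G : T|$, and $|G:T| = |\ngq : \norm{T}{Q}|$ would require $G = T\,\ngq$, which there is no reason to have for an arbitrary stabilizer $T \supseteq Q$. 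In fact the paper explicitly flags this exact inequality (whether the number of $\pi$-partial characters of a subgroup $I \supseteq Q$ with vertex $Q$ inducing $\varphi$ is at most $|\ngq : \norm{I}{Q}|$) as an open question when $2 \in \pi$, known only for $|G|$ odd or $2 \notin \pi$; so you cannot simply invoke it. The paper avoids the issue by not working with a minimal normal subgroup at this stage: it takes $M = {\bf O}_{\pi\pi'}(G)$, uses the hypothesis on $Q$ --- either $Q \le M$ outright, or $Q$ abelian plus Hall--Higman 1.2.3 to force $Q\opig/\opig \subseteq \cent{G/\opig}{M/\opig} \subseteq M/\opig$ --- to conclude $Q$ is a Hall $\pi'$-subgroup of $M$; then a Frattini argument gives $G = M\,\ngq$, and the relevant stabilizer is $I = G_{\hat\delta}$ for the $\pi'$-special extension $\hat\delta$ of $\delta$ to $M$, which contains $M$, so $G = I\,\ngq$ and $m \le |G:I| = |\ngq : \norm{I}{Q}|$ follows from degree counting. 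Notice that in your main line the hypothesis on $Q$ is never actually used, which is a symptom of the gap: without it, no argument of this type is currently known.

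The degenerate case $T = G$ is also not handled. If $N$ is a $\pi$-group, the constituents of $\varphi_N$ are in general nontrivial, so lifts of $\varphi$ need not contain $N$ in their kernel and there is no ``image of $\varphi$ in $\ipi(G/N)$'' to pass to; the proposed quotient reduction fails. The twisting sketch for the $\pi'$-case (multiply by $\hat\beta^{-1}$ and iterate) leaves unaddressed the existence of a $\pi'$-special extension, why the modified character is again a lift with the stated generalized vertex, why the descent terminates, and why the hypothesis on $Q$ persists --- you acknowledge this yourself. The paper's route in the invariant case is different and concrete: with $\hat\delta$ $G$-invariant it passes to $L$ with $L/M = \opi(G/M)$, shows the factored constituent $\alpha^*\delta^*$ of $\chi_L$ is $G$-invariant, and then uses maximality of the vertex subgroup to force $\opid(G/L) = 1$, hence $G = L$; then every member of $L_{\varphi}(Q,\delta)$ is $\pi$-factored and uniqueness follows as in the $N = G$ case. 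As written, your argument would (if it worked) prove the theorem without any hypothesis on $Q$, which the authors state they cannot do; the two missing steps above are where that hypothesis must enter.
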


\begin{proof}
We work by induction on $|G|$.  If $L_{\varphi} (Q,\delta)$ is empty, then the result is trivial.  Thus, we assume that $L_{\varphi} (Q,\delta)$ is nonempty.

Let $N$ be any normal subgroup of $G$ so that the restrictions of characters in $L_{\varphi} (Q, \delta)$ to $N$ have $\pi$-factored irreducible constituents.  Choose $\eta \in \ipi(N)$ such that $\eta$ is a constituent of $\varphi_N$.

If $G = N$, then the characters in $L_{\varphi} (Q,\delta)$ are $\pi$-factored with linear $\pi'$-special factor.  Let $\theta$ be $\pi$-special in $\irrg$ so that $\theta^o = \eta = \varphi$.  Notice that $\theta$ must be the $\pi$-special factor of the characters in $L_{\varphi} (Q,\delta)$. Also, $Q$ is a Hall $\pi$-complement of $G$, and so there is a unique $\pi'$-special extension $\beta \in \irrg$ of $\delta$. We conclude that $\theta \beta$ is the only character in $L_{\varphi} (Q,\delta)$, and the result holds.

Suppose now that $N < G$.  We can find a Clifford correspondent $\xi$ of $\varphi$ in $\ipi(G_{\eta} \mid \eta)$ that has vertex subgroup $Q$. Write $T = G_{\eta}$, and note that $N$ and $\eta$ satisfy the conditions of Lemma \ref{nicase}.  Assume that $T < G$.  By Lemma \ref{nicase}, we see that $L_{\varphi}(Q, \delta) = \bigcup L_{\xi}(Q, \delta_i)^G,$ and thus
$$|L_{\varphi}(Q, \delta)| \leq \sum_{i =1}^{k} |L_{\xi}(Q,
\delta_i)^G| \leq \sum_{i=1}^k |L_{\xi}(Q, \delta_i)| \leq
\sum_{i=1}^k |\ntq : \norm T{Q, \delta_i}|,$$
where the last inequality follows by induction.

However, $|\ntq : \norm T{Q, \delta_i}|$ is precisely the orbit size of the $\ntq$ orbit containing $\delta_i$ in the action of $\ntq$ on the $\ngq$ orbit containing $\delta$.  Thus, the sum $$\sum_{i=1}^k |\ntq : \norm T{Q, \delta_i}|$$ is precisely $|\ngq : \norm G{Q, \delta}|$, and we are done.  We may assume that $\eta$ is invariant in $G$.

Now, if we consider $\opig$, then it is trivial to see that the restrictions to $\opig$ of characters in $L_{\varphi} (Q,\delta)$ are $\pi$-factored.  Let $M =
{\bf O}_{\pi \pi'} (G)$, and let $P$ be a Hall $\pi'$-subgroup of $M$, so that $M = P \opig$.  Let $\alpha \in \irr(\opig) = \ipi(\opig)$ be a constituent of $\varphi_{\opig}$.  Using the previous paragraph with $N = \opig$ and $\eta = \alpha$, we may assume that $\alpha$ is $G$-invariant.  Thus, the constituents of $\alpha^M$ are $\pi$-factored by Lemma \ref{chief}.  Applying Lemma \ref{factor}, we may assume that $P = Q \cap M$.  If $Q$ is abelian, we see that $Q\opig/\opig \subseteq \cent {G/\opig}{M/\opig} \subseteq M/\opig$, where the last containment is by Hall-Higman's Lemma 1.2.3, and thus $Q \le M$.  Thus, in all cases, $Q \le M$, and this implies that $P = Q$.

Notice that a Frattini argument shows that $G = M \ngp$.  Let
$\hat\alpha$ be the unique $\pi$-special extension of $\alpha$ to $M$, and note that since $\alpha$ is invariant in $G$, then $\hat\alpha$ is invariant in $G$.  Let $\hat{\delta}$ be the unique $\pi'$-special extension of $\delta$ to $M$.   We set $I = G_{\hat{\delta}}$.  Assume that $I < G$.  Let $\zeta_1, \zeta_2, \ldots, \zeta_m$ denote the characters of $\ipi(I)$ with vertex $Q$ that induce irreducibly to $\varphi$.  Note that $m \leq |G : I| = |\ngq : \norm IQ|$.

By Lemma \ref{pidashcase}, induction is a bijection from $\displaystyle \bigcup_{i=1}^{m} L_{\zeta_i}(Q, \delta)$ to $L_{\varphi}(Q, \delta)$.  Therefore, by the inductive hypothesis (since we are assuming that $I < G$) we see that $$|L_{\varphi}(Q, \delta)| = \sum_{i=1}^{m} |L_{\zeta_i}(Q, \delta)| \leq \sum_{i=1}^m |\norm IQ : \norm I{Q, \delta}| = m|\norm IQ : \norm I{Q, \delta}|.$$  Now, clearly $\norm I{Q, \delta} \subseteq \norm G{Q, \delta}$.  Also, if $x \in G$ normalizes $Q$ and stabilizes $\delta$, then $x$ fixes $\hat{\delta}$ and therefore $x$ fixes $\hat{\alpha} \hat{\delta}$ (recall $\hat{\alpha}$ is invariant in $G$), and thus $x \in I$. Thus $\norm G{Q, \delta} = \norm I{Q, \delta}$.

The above inequality becomes $$|L_{\varphi}(Q, \delta)| \leq m|\norm IQ : \norm G{Q, \delta}|$$ and since $m \leq |\ngq : \norm IQ|$, we obtain $$|L_{\varphi}(Q, \delta)| \leq |\ngq : \norm IQ| |\norm IQ : \norm G{Q, \delta}| = |\norm GQ : \norm G{Q, \delta}|$$ and we are done in the case that $I < G$.

We may assume that $\hat{\delta}$ is invariant in $G$. Let $L/M = \opi(G/M)$.
Note that since $\hat{\delta}$ is invariant in $G$, the constituents of $(\hat\alpha\hat\delta)^L$ are $\pi$-factored. Also, $\hat{\delta}$ extends to a unique $\pi'$-special character $\delta^* \in \irr(L)$ that is also invariant in $G$. It follows that if $\chi \in L_{\varphi}(Q, \delta)$, then the constituents of $\chi_L$ are factorable.  By Lemma \ref{factor}, $\chi_L$ has an irreducible constituent $\alpha^* \delta^* \in \irr(L)$, where $\alpha^*$ lies over $\hat{\alpha}$ and is $\pi$-special.  Taking $L = N$ in the above work, we see that $\alpha^*$ is $G$-invariant.

Let $K/L = \opid(G/L)$.  Then $\alpha^*$ and $\delta^*$ are invariant in $G$, and thus there is a $\pi$-factorable character in $K$ lying over $\alpha^* \delta^*$.  If $K > L$, this contradicts the fact that $(Q, \delta)$ is a vertex (because the vertex subgroup would be larger if $K > L$).  Therefore $K = L$ and $\opid (G/L)$ is trivial, meaning $G = L$ and every character in $L_{\varphi}(Q, \delta)$ is factorable, and we have seen that we are done in this case.
\end{proof}

We may now easily prove our main result.

\begin{theorem} \label{mainabelian}
Assume $G$ is $\pi$-separable and $2 \in \pi$.  Suppose that $\varphi \in \ipig$ has vertex subgroup $Q$ where either $Q$ is abelian or $Q \le {\bf O}_{\pi \pi'} (G)$.  Then $|L_{\varphi}| \leq |Q:Q'|$.
\end{theorem}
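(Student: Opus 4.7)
The plan is to reduce the estimate to Theorem \ref{avhardpart} by partitioning $L_{\varphi}$ according to the second coordinate of a generalized vertex with a fixed first coordinate, and then summing. First I would fix a vertex subgroup $Q$ of $\varphi$; since all vertex subgroups of $\varphi$ are $G$-conjugate, after conjugating we may assume any lift $\chi \in L_{\varphi}$ admits a generalized vertex of the form $(Q,\delta)$. Theorem \ref{second main} guarantees that the generalized vertices of $\chi$ form a single $G$-conjugacy class, and Lemma \ref{lemma 2.3} (applied to a generalized nucleus) tells us the second coordinate $\delta$ is linear, so $\delta \in \irr(Q/Q')$.

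The key structural step is the disjoint decomposition
$$L_{\varphi} \;=\; \bigcup_{[\delta]}\, L_{\varphi}(Q,\delta),$$
where $[\delta]$ runs over a set of representatives of the $\ngq$-orbits on $\irr(Q/Q')$. For this I would verify that, for a fixed lift $\chi$, the set of its generalized vertices with first coordinate equal to $Q$ forms a single $\ngq$-orbit: conjugation by any $n \in \ngq$ carries a generalized nucleus of $\chi$ to another generalized nucleus of $\chi$ with the same first coordinate and with second coordinate $\delta^n$, while two such vertices are forced to be $\ngq$-conjugate by Theorem \ref{second main} (only elements of $\ngq$ can preserve the first coordinate). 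Orbits corresponding to non-linear $\delta$, or to linear characters not realized by any lift, simply contribute empty sets.

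With the partition in place, I would apply Theorem \ref{avhardpart} term by term to obtain
$$|L_{\varphi}| \;=\; \sum_{[\delta]} |L_{\varphi}(Q,\delta)| \;\le\; \sum_{[\delta]} |\ngq : \norm G{Q,\delta}|.$$
Each summand on the right is the length of the $\ngq$-orbit of $\delta$, so the total is at most the number of linear characters of $Q$, namely $|\irr(Q/Q')| = |Q:Q'|$. This gives the desired bound. The main content of the argument has already been absorbed into Theorem \ref{avhardpart}; the only real obstacle here is the orbit-counting bookkeeping, and concretely the verification that Theorem \ref{second main} together with Lemma \ref{lemma 2.3} ensures each lift contributes to exactly one $\ngq$-orbit of linear characters of $Q$, so that no double-counting occurs and so that the sum can be bounded by the total number of linear characters of $Q$.
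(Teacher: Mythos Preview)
Your proposal is correct and follows essentially the same route as the paper: partition $L_{\varphi}$ by $\ngq$-orbits of linear characters of $Q$, apply Theorem \ref{avhardpart} to each piece, and recognize the resulting sum of orbit lengths as $|Q:Q'|$. The paper's proof is terser about why the decomposition is disjoint and exhaustive, but you have merely made explicit what is implicit there via Theorem \ref{second main} and Lemma \ref{lemma 2.3}.
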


\begin{proof}
Since $G$ is $\pi$-separable and $2 \in \pi$, Lemma \ref{lemma 2.3} implies that any vertex character of a lift of $\varphi$ is a linear character of $Q$.  Thus, if $\delta_1, \ldots, \delta_k$ are representatives of the $\ngq$ orbits of the linear characters of $Q$, then by Theorem \ref{avhardpart},
$$|L_{\varphi}| = \sum_{i =1}^{k} |L_{\varphi}(Q, \delta_i)| \leq \sum_{i=1}^k |\ngq : \norm G{Q, \delta_i}| = |Q:Q'|.$$
\end{proof}

We would like to remove the hypothesis on $Q$.  We now discuss one potential way to do this.  Observe that in the proof of Theorem \ref{avhardpart}, we can take $N$ to be maximal so that $N$ is normal and the restrictions of characters in $L_{\varphi} (Q,\delta)$ have factored irreducible constituents.  As in that proof, we may assume that $\eta$ is $G$-invariant where $\eta$ is an irreducible constituent of $\varphi_N$.  Let $\alpha$ be the $\pi$-special character in $N$ that lifts $\eta$, and observe that $\alpha$ is $G$-invariant.  Let $\beta \in \irr(N)$ be the $\pi'$-special character so that $\beta_{Q \cap N} = \delta_{Q \cap N}$.  If we could show that $\beta$ has to be $G$-invariant, we could apply Lemma \ref{chief} to see that $N = G$, and we would be done as in the third paragraph of that proof.

If $\beta$ is not $G$-invariant, we take $I$ to be the stabilizer of $\beta$ in $G$.  Using the same argument as in that proof, we can show that $|L_{\varphi} (Q,\delta)| \le m |\norm IQ:\norm G{Q,\delta}|$ where $m$ is the number of characters in $\ipi(I)$ that have vertex $Q$ and induce $\varphi$.  If $m \le |\norm GQ: \norm IQ|$, then the desired conclusion would follow.  As we can see in the proof, we chose hypotheses on $Q$ that guarantee that can find a subgroup $I$ where this happens.  However, we ask whether this happens for arbitrary $I$.  In particular, if $\varphi \in \ipig$ has vertex $Q$ and $I$ is a subgroup that contains $Q$, then is it true that the number of $\pi$-partial characters of $I$ with vertex $Q$ that induce $\varphi$ is at most $|\norm GQ:\norm IQ|$?  The second author has shown that this is true when $|G|$ is odd or $2 \not\in \pi$ \cite{lewisnotes}, but we have not been able to settle the question when $2 \in \pi$.

\end{document}